\documentclass[11pt,a4paper]{article}
\usepackage{amsmath}
\usepackage{amsthm}
\usepackage{amsfonts}
\usepackage{latexsym}
\usepackage{graphicx}  % to deal with graphics
 \usepackage{hyperref} 

\newcounter{alphthm}
\newtheorem{prop}{Proposition}[section]
\newtheorem{thm}{Theorem}[section]

\newtheorem{cor}{Corollary}[section]

\theoremstyle{definition}
\newtheorem{definition}{Definition}[section]
\newtheorem{rem}{Remark}[section]
\newtheorem{ex}{Example}

\newcommand{\be}{\begin{equation}}
\newcommand{\ee}{\end{equation}}
\newcommand{\ben}{\begin{enumerate}}
\newcommand{\een}{\end{enumerate}}

\begin{document}
\title{T--Extended Weakly Contractive, Kannan, and Geraghty Mappings: Fixed Points, Equivalences, and Rates}

\author{
  Fatemeh Fogh\thanks{e-mail: ffogh2021@fau.edu}\\
  Department of Mathematical Sciences, Florida Atlantic University, USA
  \and
  Sara Behnamian\thanks{e-mail: sara.behnamian@sund.ku.dk}\\
  Globe Institute, University of Copenhagen, Copenhagen, 1350, Denmark
}

\maketitle

\begin{abstract}
We develop a unified $T$--extended framework for weakly contractive, weakly Kannan, and Geraghty classes of selfmaps $S$ on a metric space $(X,d)$, where distances are measured on the auxiliary image via $d(T\cdot,T\cdot)$ and the dynamics is governed by $TS$. Under the standard hypotheses on the auxiliary map $T$ (continuity, injectivity, subsequential convergence), we prove fixed--point theorems and Picard convergence for each class. Our main novelty is twofold: first, we show that the $T$--extended weakly contractive class coincides with the $T$--extended Geraghty class, and that the $T$--extended weakly Kannan class coincides with the $T$--extended Kannan--Geraghty class; second, we identify the precise mechanism behind these coincidences by transporting the problem to the induced map $F:T(X)\to T(X)$, $F(Tx)=TSx$, where the ``extended'' properties are exactly the classical ones (same control). We also give a $\Delta$--type ratio criterion on $T(X)$ and quantitative Picard rates, and we include examples (e.g.\ Volterra smoothing) that highlight the role of the auxiliary map. All results extend verbatim to rectangular (Branciari) metric spaces.
\end{abstract}

{\bf Keywords:}
Weakly contractive mappings; Kannan mappings; Geraghty contractions; Kannan--Geraghty; auxiliary-map methods; Picard iteration; rectangular (Branciari) metric spaces.

\def\thefootnote{ \ }
\footnotetext{{\em} $2010$ Mathematics Subject Classification: 47H10; 47H09}

\section{Introduction}
The Banach fixed point theorem (1922) provides existence, uniqueness, and Picard convergence for contractive selfmaps on complete metric spaces. Kannan's theorem \cite{Blair11} furnishes an independent principle: if
\[
d(fx,fy)\le \frac{\alpha}{2}\bigl[d(x,fx)+d(y,fy)\bigr]\qquad(\alpha\in[0,1)),
\]
then $f$ has a unique fixed point and the Picard iterates converge. Variable--coefficient relaxations, such as weakly contractive mappings in the sense of Dugundji--Granas \cite{Blair1} and weakly Kannan mappings \cite{000012}, preserve these conclusions in complete spaces. Geraghty \cite{Blair14} replaced the constant contraction factor by a control $\beta\in\Gamma$, obtaining
\[
d(fx,fy)\le \beta\!\bigl(d(x,y)\bigr)\,d(x,y),
\]
with $\beta(t_n)\to1\Rightarrow t_n\to0$, again ensuring a unique fixed point (see also \cite{Blair8}). These ideas sit alongside other generalizations (e.g.\ \(F\)-contractions \cite{Wardowski2012}, \(\alpha\)\(-\)\(\psi\) contractions \cite{Samet2012}, and simulation functions \cite{Khojasteh2015}).

A different but fruitful viewpoint is to transport contractive structure through an auxiliary map $T:X\to X$ and to work with $d(T\cdot,T\cdot)$ and the composition $TS$; under mild hypotheses on $T$ (continuity, injectivity, subsequential convergence), one obtains fixed points for suitable $T$--dependent contractive forms \cite{a2,a1}. The same auxiliary--map philosophy has recently been developed in the non--self setting via best proximity points \cite{FoghBehnamian2026BPP}, where $S$--proximal Geraghty and Kannan--Geraghty classes are introduced for pairs $(A,B)$ of subsets. The present paper places the weakly contractive, weakly Kannan, and Geraghty inequalities uniformly on the $T$--image and studies the resulting \emph{$T$--extended} classes \cite{Fogh2019KG, a3}.

\medskip
\noindent\textbf{What is new here.}
Our contributions are conceptual and structural. First, we prove fixed--point theorems (with Picard convergence when $T$ is sequentially convergent) for the $T$--extended weakly contractive, weakly Kannan, and Geraghty classes (Theorems~\ref{thm:TwC}, \ref{thm:TwK}, and \ref{thm:TG}). Second, we show that two pairs of classes actually coincide on the $T$--image: the $T$--extended weakly contractive and $T$--extended Geraghty classes are equivalent, and likewise the $T$--extended weakly Kannan and $T$--extended Kannan--Geraghty classes (Theorem~\ref{thm:equivPairs}). The mechanism is transparent: if $F:T(X)\to T(X)$ is defined by $F(Tx)=TSx$, then the $T$--extended properties of $S$ on $X$ are precisely the classical properties of $F$ on $(T(X),d)$ with the same control (Proposition~\ref{prop:image-equivalence}); in particular, injectivity of $T$ is essential for uniqueness (Remark~\ref{rem:injective}). Third, we provide a $\Delta$--type ratio criterion formulated on $T(X)$ and derive quantitative Picard rates (Corollary~\ref{cor:rates}). Finally, we verify that all arguments pass unchanged to rectangular (Branciari) metric spaces \cite{bran}.

\medskip
\noindent\textbf{Organization.}
Section~\ref{def:TwK}--\ref{thm:TG} introduces the $T$--extended classes and proves the corresponding fixed--point theorems. Section~\ref{subsec:equiv} identifies the induced map $F$ on $T(X)$ and establishes the pairwise equivalences of classes together with consequences for uniqueness and convergence; it also contains examples that illustrate how $T$ can convert nonexpansive behavior into contractive behavior (Volterra smoothing). The rectangular--metric analogues are recorded at the end of the section.

\section{Preliminaries}
A map $Q:X\to X$ is called \emph{sequentially convergent} if, for every sequence $\{y_n\}\subset X$, the convergence of $\{Qy_n\}$ implies that $\{y_n\}$ itself converges in $X$. It is called \emph{subsequentially convergent} if, for every sequence $\{y_n\}\subset X$, the convergence of $\{Qy_n\}$ implies that $\{y_n\}$ has a convergent subsequence in $X$; see \cite{a1}.

A map $f:X\to X$ is \emph{weakly contractive} if there exists $\overline{\alpha}:X\times X\to[0,1)$ such that
\[
d(fx,fy)\le \overline{\alpha}(x,y)\,d(x,y)\qquad(\forall\,x,y\in X),
\]
and $\sup\{\overline{\alpha}(x,y):\, a\le d(x,y)\le b\}<1$ for every $0<a\le b$ \cite{Blair1}. In a complete metric space, every weakly contractive map has a unique fixed point and its Picard iterates converge to it \cite{Blair1}. In particular, every $k$--contraction with $k\in[0,1)$ is weakly contractive by taking $\overline{\alpha}\equiv k$.

Kannan's theorem states that if $(X,d)$ is complete and $f:X\to X$ satisfies
\[
d(fx,fy)\le\frac{\alpha}{2}\bigl[d(x,fx)+d(y,fy)\bigr]\qquad(\forall\,x,y\in X),
\]
for some $\alpha\in[0,1)$, then $f$ has a unique fixed point and the Picard iterates converge to it \cite{Blair11}. A \emph{weakly Kannan} mapping is defined by the same inequality with $\alpha$ replaced by $\overline{\alpha}(x,y)\in[0,1)$ satisfying the same boundedness condition on annuli; a corresponding fixed--point theorem in complete metric spaces is proved in \cite{000012}.

A map $f:X\to X$ is a \emph{Geraghty contraction} if there exists $\beta\in\Gamma$ such that
\[
d(fx,fy)\le \beta\bigl(d(x,y)\bigr)\,d(x,y)\qquad(\forall\,x,y\in X),
\]
where $\Gamma=\{\beta:[0,\infty)\to[0,1):\ \beta(t_n)\to 1\Rightarrow t_n\to 0\}$. In a complete metric space such a map has a unique fixed point \cite{Blair14}. A related characterization uses the ratios $\Delta_n=\tfrac{d(fx_n,fy_n)}{d(x_n,y_n)}$ along iterative subsequences; see \cite{Blair14}.

For the auxiliary--map framework, let $T,S:X\to X$. The map $S$ is called \emph{$T$--extended weakly contractive} if there exists $k\in[0,1)$ such that
\begin{equation}\label{shin4}
d(TSx,TSy)\le k\,d(Tx,Ty)\qquad(\forall\,x,y\in X).
\end{equation}
If $(X,d)$ is complete and $T$ is continuous, injective, and subsequentially convergent, then \eqref{shin4} implies that $S$ has a unique fixed point; if $T$ is sequentially convergent, the iterates $\{S^{n}x_0\}$ converge to it for every $x_0\in X$ \cite{a2,a1}.

Similarly, $S$ is called \emph{$T$--extended Kannan} if there exists $k\in[0,1)$ such that
\begin{equation}\label{shin3}
d(TSx,TSy)\le \frac{k}{2}\Bigl[d(Tx,TSx)+d(Ty,TSy)\Bigr]\qquad(\forall\,x,y\in X),
\end{equation}
and, under the same standing hypotheses on $T$, this condition ensures existence and uniqueness of a fixed point of $S$ with convergence of Picard iterates \cite{a2,a1}.

Finally, we recall the notion of a rectangular (Branciari) metric. A function $d:X\times X\to[0,\infty)$ is a \emph{rectangular metric} if (i) $d(x,y)=0$ iff $x=y$, (ii) $d(x,y)=d(y,x)$, and (iii) for all distinct $x,z\in X$ and all $y,w\in X$ with $y\neq w$,
\[
d(x,z)\le d(x,y)+d(y,w)+d(w,z).
\]
The pair $(X,d)$ is then called a rectangular metric space \cite{bran,a1}.

\section{Main Results}

\subsection{Standing assumptions and convention}
Throughout, $(X,d)$ is a complete metric space. The auxiliary map $T:X\to X$ is assumed continuous, injective, and subsequentially convergent; when convergence of the full Picard sequence is claimed, $T$ is additionally assumed sequentially convergent.

\smallskip
\noindent\emph{Convention.} Equations \eqref{shin4}--\eqref{shin3} state the constant--coefficient versions. From now on we use their variable--coefficient generalizations on $T(X)$ (Dugundji--Granas type \cite{Blair1}), encoded by $\overline{\alpha}(Tx,Ty)$ with annulus bounds.

\subsection{\texorpdfstring{$T$--extended classes and fixed points}{T-extended classes and fixed points}}

\begin{definition}[$T$--extended weakly Kannan]\label{def:TwK}
Let $T,S:X\to X$. We say $S$ is $T$--extended weakly Kannan if there exists $\overline{\alpha}:T(X)\times T(X)\to[0,1)$ such that
\begin{equation}\label{eq:TwK}
d(TSx,TSy)\le \frac{\overline{\alpha}(Tx,Ty)}{2}\Bigl[d(Tx,TSx)+d(Ty,TSy)\Bigr]\qquad(\forall x,y\in X),
\end{equation}
and for every $0<a\le b$,
\[
\Theta(a,b):=\sup\bigl\{\overline{\alpha}(u,v):\,u,v\in T(X),\ a\le d(u,v)\le b\bigr\}<1.
\]
\end{definition}

\begin{thm}[Extended weakly Kannan fixed point]\label{thm:TwK}
Let $(X,d)$ be complete and let $T,S:X\to X$ with $T$ continuous, injective, and subsequentially convergent. If $S$ is $T$--extended weakly Kannan, then $S$ has a unique fixed point $u\in X$. Moreover, if $T$ is sequentially convergent, then $x_{n+1}=Sx_n$ converges to $u$ for every $x_0\in X$.
\end{thm}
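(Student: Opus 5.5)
We would argue directly along the Picard orbit, transported to $T(X)$. Fix $x_0\in X$, set $x_{n+1}=Sx_n$, and write $y_n=Tx_n$ and $d_n=d(y_n,y_{n+1})=d(Tx_n,TSx_n)$. Applying \eqref{eq:TwK} with $x=x_{n-1}$, $y=x_n$ gives
\[
d_n\le \tfrac{\overline{\alpha}(y_{n-1},y_n)}{2}\,(d_{n-1}+d_n),
\]
hence $d_n\le \frac{\overline{\alpha}}{2-\overline{\alpha}}\,d_{n-1}\le d_{n-1}$. So $(d_n)$ is nonincreasing and converges to some $r\ge0$. Suppose $r>0$. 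Then $r\le d(y_{n-1},y_n)\le d_0$ for all $n$, and the annulus bound gives $\overline{\alpha}(y_{n-1},y_n)\le\Theta(r,d_0)=:\theta<1$. Therefore $d_n\le\frac{\theta}{2-\theta}d_{n-1}$, so $d_n\to0$, a contradiction. Hence $d_n\to0$.

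Next we show that $(y_n)$ is Cauchy, using the Kannan form directly. From \eqref{eq:TwK} with $x=x_m$, $y=x_n$, and $\overline{\alpha}<1$,
\[
d(y_{m+1},y_{n+1})\le \tfrac12\,(d_m+d_n)\to0.
\]
This is the point where the Kannan structure makes things easier than the weakly contractive case: no annulus argument is needed for the Cauchy step, only $\overline{\alpha}<1$ and $d_n\to0$. Completeness then gives $y_n\to z\in X$. Since $(Tx_n)$ converges, subsequential convergence of $T$ yields a subsequence $x_{n_k}\to u$. Continuity of $T$ then gives $Tx_{n_k}\to Tu$, so $z=Tu$.

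To see that $u$ is fixed, we use \eqref{eq:TwK} with $x=u$, $y=x_{n_k}$:
\[
d(TSu,y_{n_k+1})\le \tfrac12\bigl[d(Tu,TSu)+d_{n_k}\bigr].
\]
Letting $k\to\infty$ gives $d(TSu,Tu)\le\frac12 d(Tu,TSu)$, so $TSu=Tu$, and injectivity of $T$ gives $Su=u$. For uniqueness, if $Sv=v$ as well, then \eqref{eq:TwK} gives $d(Tu,Tv)\le\frac{\overline{\alpha}}{2}(0+0)=0$, so $Tu=Tv$ and hence $u=v$. Finally, if $T$ is sequentially convergent, convergence of $(Tx_n)$ forces the whole sequence $(x_n)$ to converge to some limit. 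That limit must agree with the subsequential limit $u$, so $x_n\to u$.

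The main obstacle is ruling out $r>0$, which is where the annulus bound $\Theta$ is genuinely needed. There we must check that the pairs $(y_{n-1},y_n)$ lie in a fixed annulus $[r,d_0]$ with $r>0$. If $d_0=0$, then $x_0$ is already a fixed point after injectivity, and the argument is trivial. Monotonicity of $(d_n)$ supplies the upper bound $d_0$ and the limit supplies the lower bound $r$.
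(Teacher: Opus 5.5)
Your proof is correct and follows essentially the same route as the paper. It uses the one-step Kannan estimate along the Picard orbit in $T(X)$, the annulus bound to force the consecutive distances to zero, subsequential convergence plus continuity of $T$ to identify the limit as $Tu$, passage to the limit with $(x,y)=(u,x_{n_j})$, and injectivity of $T$ for both $Su=u$ and uniqueness. You also make explicit two steps the paper only asserts: the monotone-limit contradiction via $\Theta(r,d_0)$, and the Cauchy estimate $d(y_{m+1},y_{n+1})\le\tfrac12(d_m+d_n)$. The latter matters because the annulus-dependent ratios $\Theta(a,b)/(2-\Theta(a,b))$ may tend to $1$ as $a\to0$, so the consecutive-step bounds alone would not give the Cauchy property.
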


\begin{proof}
Set $x_{n+1}=Sx_n$ and $z_n:=Tx_n$. From \eqref{eq:TwK} with $(x,y)=(x_n,x_{n-1})$,
\[
\Bigl(1-\tfrac{\overline{\alpha}(z_n,z_{n-1})}{2}\Bigr)d(z_{n+1},z_n)\le \tfrac{\overline{\alpha}(z_n,z_{n-1})}{2}\,d(z_{n-1},z_n).
\]
On each annulus $[a,b]$ ($a>0$) the ratio is $\le q_{a,b}:=\Theta(a,b)/(2-\Theta(a,b))<1$, hence $d(z_n,z_{n-1})\to0$ and $\{z_n\}$ is Cauchy, so $z_n\to z^\ast$. By subsequential convergence of $T$, some $x_{n_j}\to u$ with $Tu=z^\ast$, and passing to the limit in \eqref{eq:TwK} with $(x,y)=(u,x_{n_j})$ gives $TSu=Tu$, hence $Su=u$. Uniqueness follows by applying \eqref{eq:TwK} to $(u,v)$ and using injectivity of $T$. If $T$ is sequentially convergent then $x_n\to u$.
\end{proof}

\begin{definition}[$T$--extended weakly contractive]\label{def:TwC}
There exists $\overline{\alpha}:T(X)\times T(X)\to[0,1)$ such that
\begin{equation}\label{eq:TwC}
d(TSx,TSy)\le \overline{\alpha}(Tx,Ty)\,d(Tx,Ty)\qquad(\forall x,y\in X),
\end{equation}
and $\sup\{\overline{\alpha}(u,v):\,u,v\in T(X),\,a\le d(u,v)\le b\}<1$ for every $0<a\le b$.
\end{definition}

\begin{thm}[Extended weakly contractive fixed point]\label{thm:TwC}
Under the standing assumptions on $T$, if $S$ is $T$--extended weakly contractive then $S$ has a unique fixed point $u\in X$. If $T$ is sequentially convergent, then $S^n x_0\to u$ for every $x_0\in X$.
\end{thm}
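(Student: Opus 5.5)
We follow the scheme of the proof of Theorem~\ref{thm:TwK}, working with the image sequence. Fix $x_0\in X$ and set $x_{n+1}=Sx_n$ and $z_n:=Tx_n$. Applying \eqref{eq:TwC} with $(x,y)=(x_n,x_{n-1})$ gives
\[
d(z_{n+1},z_n)\le \overline{\alpha}(z_n,z_{n-1})\,d(z_n,z_{n-1})\le d(z_n,z_{n-1}).
\]
So $t_n:=d(z_n,z_{n-1})$ is nonincreasing and converges to some $t\ge0$. If $t>0$, then every $t_n$ lies in the annulus $[t,t_1]$. Writing $\theta:=\sup\{\overline{\alpha}(u,v): t\le d(u,v)\le t_1\}<1$, we get $t_{n+1}\le\theta t_n$, so $t_n\to0$, which contradicts $t>0$. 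Hence $d(z_{n+1},z_n)\to0$.

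\textbf{Cauchy step (main obstacle).} This is a Dugundji--Granas type argument, now carried out on $T(X)$. Suppose $\{z_n\}$ is not Cauchy. Then there are $\varepsilon>0$ and indices $m_k>n_k\ge k$ with $m_k$ chosen minimal such that $d(z_{m_k},z_{n_k})\ge\varepsilon$. Minimality and the triangle inequality give $d(z_{m_k},z_{n_k})\to\varepsilon$. We also have
\[
d(z_{m_k},z_{n_k})\le d(z_{m_k},z_{m_k+1})+d(z_{m_k+1},z_{n_k+1})+d(z_{n_k+1},z_{n_k}).
\]
For large $k$ the pair $(z_{m_k},z_{n_k})$ lies in the annulus $[\varepsilon,2\varepsilon]$. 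Let $\theta_\varepsilon<1$ be the supremum of $\overline{\alpha}$ on that annulus. Then \eqref{eq:TwC} yields $d(z_{m_k+1},z_{n_k+1})\le\theta_\varepsilon\, d(z_{m_k},z_{n_k})$. Letting $k\to\infty$ gives $\varepsilon\le\theta_\varepsilon\varepsilon$, a contradiction. So $\{z_n\}$ is Cauchy, and by completeness $z_n\to z^\ast$.

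\textbf{Fixed point.} Since $\{Tx_n\}$ converges, subsequential convergence of $T$ gives a subsequence $x_{n_j}\to u$. Continuity of $T$ gives $Tu=z^\ast$. From \eqref{eq:TwC} with $(x,y)=(u,x_{n_j})$ and $\overline{\alpha}<1$ we get
\[
d(TSu,z_{n_j+1})\le d(Tu,z_{n_j})\to0,
\]
so $TSu=z^\ast=Tu$. Injectivity of $T$ then gives $Su=u$.

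\textbf{Uniqueness.} If $Sv=v$ with $v\ne u$, then $Tv\ne Tu$ by injectivity. But
\[
d(Tu,Tv)=d(TSu,TSv)\le\overline{\alpha}(Tu,Tv)\,d(Tu,Tv)<d(Tu,Tv),
\]
which is impossible.

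\textbf{Convergence.} If $T$ is sequentially convergent, then the whole sequence $\{x_n\}$ converges, say $x_n\to w$. Its subsequence $x_{n_j}$ converges to $u$, so $w=u$ and $S^nx_0\to u$.
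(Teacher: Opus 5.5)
Your proof is correct and follows the same route as the paper: you derive the one-step inequality on the image orbit $z_n=Tx_n$, use a Dugundji--Granas argument to get $z_n\to z^\ast$, and then use subsequential convergence, continuity and injectivity of $T$ to obtain the fixed point $u$ and its uniqueness, exactly as in the proof of Theorem~\ref{thm:TwK}. The only difference is that you write out the Cauchy step that the paper merely cites from \cite{Blair1}, and you do it properly. You apply \eqref{eq:TwC} to the non-adjacent pairs $(x_{m_k},x_{n_k})$, which is needed because the one-step inequality alone would not suffice. You also rely on completeness of $X$ rather than of $T(X)$, which need not be complete.
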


\begin{proof}
With $z_n=Tx_n$, \eqref{eq:TwC} yields $d(z_{n+1},z_n)\le \overline{\alpha}(z_n,z_{n-1})\,d(z_n,z_{n-1})$. The weakly contractive iteration on $T(X)$ (cf.\ \cite{Blair1}) gives $z_n\to z^\ast$; pass to the limit as in Theorem~\ref{thm:TwK}.
\end{proof}

\begin{definition}[$T$--extended Geraghty]\label{def:TG}
There exists $\beta\in\Gamma$ with
\begin{equation}\label{eq:TG}
d(TSx,TSy)\le \beta\!\bigl(d(Tx,Ty)\bigr)\,d(Tx,Ty)\qquad(\forall x,y\in X),
\end{equation}
where $\Gamma=\{\beta:[0,\infty)\to[0,1):\ \beta(t_n)\to1\Rightarrow t_n\to0\}$.
\end{definition}

\begin{thm}[Extended Geraghty fixed point]\label{thm:TG}
Under the standing assumptions on $T$, if $S$ is $T$--extended Geraghty then $S$ has a unique fixed point $u\in X$. If $T$ is sequentially convergent, then $S^n x_0\to u$ for every $x_0$.
\end{thm}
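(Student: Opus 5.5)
We follow the scheme of Theorems~\ref{thm:TwK} and \ref{thm:TwC}: build the Picard orbit, show that its image under $T$ is Cauchy, and then use the hypotheses on $T$ to return to $X$. Fix $x_0\in X$, set $x_{n+1}=Sx_n$ and $z_n:=Tx_n$, so that $z_{n+1}=TSx_n$. Applying \eqref{eq:TG} with $(x,y)=(x_n,x_{n-1})$ gives
\[
d(z_{n+1},z_n)\le \beta\bigl(d(z_n,z_{n-1})\bigr)\,d(z_n,z_{n-1})<d(z_n,z_{n-1})
\]
whenever $d(z_n,z_{n-1})>0$. If $z_{n}=z_{n-1}$ for some $n$, then injectivity of $T$ gives $x_n=x_{n-1}$, which is already a fixed point. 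Otherwise $t_n:=d(z_n,z_{n-1})$ is strictly decreasing, so $t_n\to t\ge0$. If $t>0$, then $t_{n+1}/t_n\le\beta(t_n)<1$ and $t_{n+1}/t_n\to1$, which forces $\beta(t_n)\to1$. The defining property of $\Gamma$ then gives $t_n\to0$, a contradiction. Hence $t_n\to0$.

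\textbf{Cauchy step (the main obstacle).} Suppose $\{z_n\}$ is not Cauchy. Then there are $\varepsilon>0$ and indices $m_k>n_k\ge k$ with $d(z_{m_k},z_{n_k})\ge\varepsilon$, where $m_k$ is chosen minimal. The usual triangle-inequality bookkeeping, combined with $t_n\to0$, gives
\[
d(z_{m_k},z_{n_k})\to\varepsilon \quad\text{and}\quad d(z_{m_k+1},z_{n_k+1})\to\varepsilon .
\]
Applying \eqref{eq:TG} to $(x_{m_k},x_{n_k})$ yields
\[
\frac{d(z_{m_k+1},z_{n_k+1})}{d(z_{m_k},z_{n_k})}\le\beta\bigl(d(z_{m_k},z_{n_k})\bigr)<1 .
\]
The left-hand side tends to $1$, so $\beta(d(z_{m_k},z_{n_k}))\to1$. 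By the $\Gamma$ property this forces $d(z_{m_k},z_{n_k})\to0$, contradicting $d(z_{m_k},z_{n_k})\to\varepsilon>0$. Thus $\{z_n\}$ is Cauchy, and by completeness $z_n\to z^\ast$.

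For a shortcut, note that \eqref{eq:TG} implies \eqref{eq:TwC} with $\overline{\alpha}(u,v):=\beta(d(u,v))$. This requires checking that $\sup\{\beta(t): a\le t\le b\}<1$ for $0<a\le b$. If the supremum equalled $1$, there would be $t_n\in[a,b]$ with $\beta(t_n)\to1$, so $t_n\to0$, which is impossible since $t_n\ge a>0$. With this check, the whole theorem follows directly from Theorem~\ref{thm:TwC}. I would present this reduction as the proof, keeping the Cauchy argument above only as a remark, since it is the step that Theorem~\ref{thm:TwC} delegates to \cite{Blair1}.

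\textbf{Return to $X$ and uniqueness.} Since $Tx_n=z_n$ converges, subsequential convergence of $T$ gives a subsequence $x_{n_j}\to u$, and continuity of $T$ gives $Tu=z^\ast$. By \eqref{eq:TG},
\[
d(TSu,z_{n_j+1})\le d(Tu,Tx_{n_j})\to0,
\]
so $TSu=z^\ast=Tu$, and injectivity of $T$ gives $Su=u$. For uniqueness, suppose $Sv=v$ with $v\neq u$. Then $Tu\ne Tv$ by injectivity, and
\[
d(Tu,Tv)=d(TSu,TSv)\le\beta\bigl(d(Tu,Tv)\bigr)\,d(Tu,Tv)<d(Tu,Tv),
\]
which is absurd. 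Finally, if $T$ is sequentially convergent, the whole sequence $\{x_n\}$ converges, and its limit must be $u$ because a subsequence already converges to $u$.
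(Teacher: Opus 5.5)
Your proof is correct and follows the paper's route: the one-step Geraghty inequality for $z_n=Tx_n$, Cauchyness of $\{z_n\}$ by Geraghty's argument (which you write out in full, where the paper simply cites Geraghty's criterion), and the return to $X$ via subsequential convergence, continuity and injectivity of $T$. Your preferred shortcut $\overline\alpha(u,v):=\beta(d(u,v))$, reducing everything to Theorem~\ref{thm:TwC}, is exactly the (3)$\Rightarrow$(1) step the paper later uses in Theorem~\ref{thm:equivPairs}, and your verification of the annulus bound is sound.
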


\begin{proof}
With $z_n=Tx_n$, \eqref{eq:TG} gives $d(z_{n+1},z_n)\le \beta(d(z_n,z_{n-1}))\,d(z_n,z_{n-1})$. By Geraghty's criterion (e.g.\ \cite{Blair14}), $\{z_n\}$ is Cauchy. Finish as in Theorem~\ref{thm:TwK}.
\end{proof}

\medskip
\begin{cor}[Quantitative Picard rates]\label{cor:rates}
Let $x_{n+1}=Sx_n$ and set $z_n:=Tx_n$. Denote by $z^\ast:=\lim_{n\to\infty} z_n=Tu$ the $T$--image of the fixed point $u$ of $S$.
\begin{enumerate}
\item If $S$ is $T$--extended weakly contractive and there exist $q\in(0,1)$ and $N$ such that $\overline\alpha(z_n,z_{n-1})\le q$ for all $n\ge N$, then for $n\ge N$,
\[
d(z_{n+1},z_n)\le q\,d(z_n,z_{n-1}),\qquad
d(z_n,z^\ast)\le \frac{q}{1-q}\,d(z_N,z_{N-1}).
\]
\item If $S$ is $T$--extended Geraghty and $\beta(t)\le q<1$ on $[0,\operatorname{diam}(\{z_n\})]$, then
\[
d(z_{n+1},z_n)\le q\,d(z_n,z_{n-1})\quad\text{and}\quad
d(z_n,z^\ast)\le \frac{q}{1-q}\,d(z_1,z_0).
\]
\end{enumerate}
\end{cor}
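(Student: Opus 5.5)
The plan is to turn each hypothesis into a geometric decay of consecutive gaps $d(z_{n+1},z_n)$ and then sum a tail series, using the limit $z^\ast=Tu$ supplied by Theorems~\ref{thm:TwC} and \ref{thm:TG}. The standing assumptions on $T$ are in force, so those theorems already give that $\{z_n\}$ converges to $z^\ast=Tu$. Only the quantitative estimates therefore remain to be shown.

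\emph{Part 1.} Apply \eqref{eq:TwC} with $(x,y)=(x_n,x_{n-1})$. Since $Sx_n=x_{n+1}$, this gives
\[
d(z_{n+1},z_n)\le \overline\alpha(z_n,z_{n-1})\,d(z_n,z_{n-1})\le q\,d(z_n,z_{n-1})\qquad(n\ge N).
\]
This is the first inequality. Iterating it gives $d(z_{k+1},z_k)\le q^{k-N+1}d(z_N,z_{N-1})$ for $k\ge N$. For $n\ge N$, the triangle inequality together with $z_m\to z^\ast$ (let $m\to\infty$ in $d(z_n,z_m)\le\sum_{k=n}^{m-1}d(z_{k+1},z_k)$) gives
\[
d(z_n,z^\ast)\le\sum_{k\ge n}q^{k-N+1}d(z_N,z_{N-1})=\frac{q^{\,n-N+1}}{1-q}\,d(z_N,z_{N-1})\le\frac{q}{1-q}\,d(z_N,z_{N-1}).
\]
The last step uses $q^{n-N+1}\le q$ for $n\ge N$. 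This argument in fact yields the sharper geometric rate $q^{n-N+1}/(1-q)$.

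\emph{Part 2.} Use \eqref{eq:TG} with the same pair $(x_n,x_{n-1})$. The argument $t=d(z_n,z_{n-1})$ lies in $[0,\operatorname{diam}\{z_n\}]$, so $\beta(t)\le q$. Hence $d(z_{n+1},z_n)\le q\,d(z_n,z_{n-1})$ for all $n\ge1$, and so $d(z_{k+1},z_k)\le q^k d(z_1,z_0)$. Summing the tail exactly as in Part 1 gives
\[
d(z_n,z^\ast)\le \frac{q^n}{1-q}\,d(z_1,z_0)\le\frac{q}{1-q}\,d(z_1,z_0)\qquad(n\ge1).
\]

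The only delicate point is the range of $n$ in the second estimate of Part 2. At $n=0$ the tail sum gives only $d(z_0,z^\ast)\le \frac{1}{1-q}\,d(z_1,z_0)$. So I would state, and read, the stated bound for $n\ge1$; this is the analogue of $n\ge N$ in Part 1. Nothing else is an obstacle: the existence of $z^\ast$ and the identification $z^\ast=Tu$ are inherited from the fixed-point theorems. If $T$ is sequentially convergent, these rates on $T(X)$ describe the convergence $x_n\to u$ only through the image. No rate in $X$ itself is claimed unless $T^{-1}$ has a modulus of continuity on $T(X)$.
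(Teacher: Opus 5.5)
Your proof is correct and follows the paper's own argument: iterate the one-step inequality obtained from \eqref{eq:TwC} (resp.\ \eqref{eq:TG}) with $(x,y)=(x_n,x_{n-1})$ along the Picard orbit, then sum the geometric tail to reach $z^\ast$. Your caveat that the second bound in Part~2 must be read for $n\ge 1$ is correct, since it can fail at $n=0$ (e.g.\ $T=\mathrm{Id}$, $Sx=x/2$ on $\mathbb{R}$, $q=\tfrac12$, $x_0=1$), and it is a worthwhile refinement of the statement as written.
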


\begin{proof}
Iterate the one--step inequalities from \eqref{eq:TwC} and \eqref{eq:TG} along the Picard path and sum the tail of the geometric series.
\end{proof}

\subsection{Equivalence on the image and pairwise equivalences}\label{subsec:equiv}

\begin{prop}[Equivalence on the image space]\label{prop:image-equivalence}
Let $T:X\to X$ be continuous and injective and define $F:T(X)\to T(X)$ by $F(Tx):=TSx$. Then $S$ is $T$-extended weakly contractive (resp.\ weakly Kannan, Geraghty) on $X$ if and only if $F$ is weakly contractive (resp.\ weakly Kannan, Geraghty) on the metric subspace $(T(X),d)$ with the same control.
\end{prop}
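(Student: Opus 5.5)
The first step is to check that $F$ is well defined. Since $T$ is injective, each $w\in T(X)$ has a unique preimage $x=T^{-1}w$. Hence $F(w):=TS(T^{-1}w)$ is a single-valued map. It takes values in $T(X)$ because $TSx=T(Sx)$ with $Sx\in X$. Continuity of $T$ plays no role here; only injectivity is used.

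With $F$ available, I will use the substitution $u=Tx$, $v=Ty$. As $(x,y)$ ranges over $X\times X$, the pair $(u,v)$ ranges over $T(X)\times T(X)$, and $F u=TSx$, $F v=TSy$. I then rewrite each defining inequality term by term. The weakly contractive condition \eqref{eq:TwC} becomes
\[
d(Fu,Fv)\le\overline{\alpha}(u,v)\,d(u,v)\qquad(\forall\,u,v\in T(X)).
\]
The weakly Kannan condition \eqref{eq:TwK} becomes
\[
d(Fu,Fv)\le\tfrac{\overline{\alpha}(u,v)}{2}\bigl[d(u,Fu)+d(v,Fv)\bigr].
\]
The Geraghty condition \eqref{eq:TG} becomes
\[
d(Fu,Fv)\le\beta(d(u,v))\,d(u,v).
\]
The control is literally the same function: $\overline{\alpha}$ is already defined on $T(X)\times T(X)$, and $\beta\in\Gamma$ is unchanged.

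The side conditions transfer without change. The annulus bound $\sup\{\overline{\alpha}(u,v): u,v\in T(X),\ a\le d(u,v)\le b\}<1$ is stated on $T(X)$ in Definitions~\ref{def:TwK} and \ref{def:TwC}. It is therefore exactly the Dugundji--Granas annulus condition for $F$ on the metric space $(T(X),d)$. Membership $\beta\in\Gamma$ does not involve the space at all.

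Both implications follow from this bijective correspondence. For ($\Rightarrow$), take $u,v\in T(X)$, write $u=Tx$, $v=Ty$, and apply the hypothesis on $S$. For ($\Leftarrow$), take $x,y\in X$ and apply the hypothesis on $F$ to $u=Tx$, $v=Ty$, using $F(Tx)=TSx$. The only real obstacle is the surjectivity of $(x,y)\mapsto(Tx,Ty)$ onto $T(X)\times T(X)$, which holds by the definition of the image, together with the well-definedness of $F$ established in the first step.

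Finally, I will add a remark: the equivalence holds on the subspace $(T(X),d)$, which need not be complete. Completeness is not needed for the equivalence itself; it matters only when one invokes the classical fixed point theorems, which is where subsequential convergence of $T$ enters, as in Theorem~\ref{thm:TwK}.
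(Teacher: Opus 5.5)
Your proposal is correct and is essentially the paper's argument. The paper simply notes that, via $F(Tx)=TSx$, the inequalities \eqref{eq:TwC}, \eqref{eq:TwK}, \eqref{eq:TG} are the classical ones written in $d(T\cdot,T\cdot)$, with the control already defined on $T(X)\times T(X)$. You make explicit what the paper leaves implicit: well-definedness of $F$ from injectivity of $T$, and surjectivity of $(x,y)\mapsto(Tx,Ty)$ onto $T(X)\times T(X)$ for both directions. Your remark that continuity of $T$ is not needed is accurate.
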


\begin{proof}
This is immediate from the identities $F(Tx)=TSx$ and the fact that \eqref{eq:TwC}, \eqref{eq:TwK}, and \eqref{eq:TG} are the classical inequalities written with $d(T\cdot,T\cdot)$.
\end{proof}

\begin{rem}[Injectivity of $T$ is essential]\label{rem:injective}
Let $X=\{0,1\}$ with the discrete metric and let $T:X\to X$ be constant: $Tx=0$ for all $x$. Then for any $S$ and any control,
\[
d(TSx,TSy)=0\le \overline\alpha(Tx,Ty)\,d(Tx,Ty)=0,
\]
so all $T$--extended inequalities hold trivially on $T(X)=\{0\}$, while $S$ may have two distinct fixed points. Thus uniqueness can fail without injectivity of $T$.
\end{rem}

\begin{thm}[Pairwise equivalence and existence]\label{thm:equivPairs}
Under the standing assumptions on $T$ (continuity, injectivity, subsequential convergence), the following properties are equivalent in pairs:
\begin{align*}
\text{\rm(1)}\;& d(TSx,TSy)\le \overline\alpha(Tx,Ty)\,d(Tx,Ty)\quad \text{with }\sup_{a\le d(Tx,Ty)\le b}\overline\alpha(Tx,Ty)<1,\\
\text{\rm(2)}\;& d(TSx,TSy)\le \tfrac{\overline\alpha(Tx,Ty)}{2}\bigl[d(Tx,TSx)+d(Ty,TSy)\bigr],\\
\text{\rm(3)}\;& d(TSx,TSy)\le \beta\!\bigl(d(Tx,Ty)\bigr)\,d(Tx,Ty)\quad(\beta\in\Gamma),\\
\text{\rm(4)}\;& d(TSx,TSy)\le \tfrac{\beta(d(Tx,Ty))}{2}\bigl[d(Tx,TSx)+d(Ty,TSy)\bigr]\quad(\beta\in\Gamma).
\end{align*}
Namely, \text{\rm(1)}$\Longleftrightarrow$\text{\rm(3)} and \text{\rm(2)}$\Longleftrightarrow$\text{\rm(4)}. In each case $S$ has a unique fixed point. If, in addition, $T$ is sequentially convergent, then for every $x_0\in X$ the Picard sequence $x_{n+1}=Sx_n$ converges to that fixed point.
\end{thm}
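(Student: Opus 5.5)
The plan is to reduce everything to the image space using Proposition~\ref{prop:image-equivalence}. Set $F:T(X)\to T(X)$, $F(Tx)=TSx$; this is well defined because $T$ is injective. Then (1)--(4) become, respectively, the weakly contractive, weakly Kannan, Geraghty and Kannan--Geraghty inequalities for $F$ on $(T(X),d)$, with the control read on $T(X)$. Both equivalences are therefore statements about controls on a single metric space. Once they are proved, existence, uniqueness and Picard convergence follow from what is already established. For (1) and (3) this is Theorems~\ref{thm:TwC} and \ref{thm:TG}. For (2) it is Theorem~\ref{thm:TwK}. For (4) it follows from (2) via the equivalence (2)$\Leftrightarrow$(4).

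\emph{(3)$\Rightarrow$(1) and (4)$\Rightarrow$(2).} Given $\beta\in\Gamma$, put $\overline\alpha(u,v):=\beta(d(u,v))$, which takes values in $[0,1)$. The inequality is then literally the same, so only the annulus bound needs checking. Suppose $\sup\{\beta(t):a\le t\le b\}=1$ for some $0<a\le b$. Then there are $t_n\in[a,b]$ with $\beta(t_n)\to1$. The defining property of $\Gamma$ forces $t_n\to0$, which contradicts $t_n\ge a>0$. The Kannan case uses the identical argument, since the control enters only through its values.

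\emph{(1)$\Rightarrow$(3) and (2)$\Rightarrow$(4).} Given $\overline\alpha$, I would define $\beta(0):=0$ and, for $t>0$,
\[
\beta(t):=\sup\{\overline\alpha(u,v):\,u,v\in T(X),\ d(u,v)=t\},
\]
with $\beta(t):=0$ when no pair realizes $t$. Applying the annulus bound with $a=b=t$ gives $\beta(t)<1$, and $\overline\alpha(u,v)\le\beta(d(u,v))$ holds by construction, so (3) (resp.\ (4)) follows. To put $\beta$ in $\Gamma$, suppose $\beta(t_n)\to1$. If $t_n\not\to0$, a subsequence satisfies $t_n\ge\varepsilon$. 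When that subsequence is also bounded by some $b$, the bound $\Theta(\varepsilon,b)<1$ gives a contradiction.

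The main obstacle is the unbounded case $t_n\to\infty$, because the annulus condition says nothing about large distances. I would first try to absorb it by noting that every argument the paper uses (Theorems~\ref{thm:TwC}--\ref{thm:TG} and Corollary~\ref{cor:rates}) only evaluates the control along Picard orbits. The weakly contractive estimate makes these orbits bounded in $T(X)$, so one can restrict to distances in a bounded range $[0,R]$ where the argument above applies. If a genuinely global equivalence is wanted, I would check whether $T(X)$ is bounded, or else record the equivalence with $\Gamma$ understood on bounded sets. This is the step where the proof may need such a qualification rather than a direct argument.
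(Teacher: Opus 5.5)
You follow the paper's own route. You take $\overline\alpha(u,v):=\beta(d(u,v))$ for (3)$\Rightarrow$(1) and (4)$\Rightarrow$(2), the sup--construction of $\beta$ for the converses, and existence from Theorems~\ref{thm:TwC}, \ref{thm:TwK}, \ref{thm:TG}, with (4) handled through the valid implication (4)$\Rightarrow$(2). Those parts are fine. Your conventions $\beta(0):=0$ and $\beta(t):=0$ for unrealized $t$ are more careful than the paper's, whose $\beta(0)=\sup_u\overline\alpha(u,u)$ may equal $1$.

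The gap is exactly the step you flagged, and it cannot be closed. The paper's proof only says that the annulus bound gives $\sup_{a\le t\le b}\beta(t)<1$, ``hence $\beta\in\Gamma$''. That bound says nothing about sequences $t_n\to\infty$, and (1)$\Rightarrow$(3) is false as stated. Take $X=\{n^2:n\ge0\}\subset\mathbb{R}$, which is closed and hence complete. Let $T=\mathrm{Id}_X$, which meets all standing assumptions, and set $S0=0$, $S(n^2)=(n-1)^2$ for $n\ge1$.
\begin{itemize}
\item For $m>n\ge1$ one has $|Sm^2-Sn^2|/|m^2-n^2|=1-\tfrac{2}{m+n}$, with $m+n\le m^2-n^2$.
\item For $n\ge1$ one has $|Sn^2-S0|/n^2=(1-\tfrac1n)^2$.
\item Let $\overline\alpha$ be this ratio. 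Every pair at distance at most $b\ge1$ then has $\overline\alpha\le\max\{1-\tfrac2b,(1-\tfrac1{\sqrt b})^2\}<1$, so (1) holds.
\item Any $\beta$ satisfying (3) must have $\beta(n^2)\ge(1-\tfrac1n)^2\to1$ while $n^2\to\infty$, so $\beta\notin\Gamma$.
\end{itemize}
The Kannan pair fails in the same way. On $\mathbb{R}$ with $T=\mathrm{Id}$, let $Sx=\lambda(|x|)\,x$ with $\lambda(r)=\tfrac{r}{3(1+r)}$. This map is weakly Kannan with $\Theta(a,b)\le\max\{\tfrac{2\lambda(2b)}{1-\lambda(2b)},\tfrac12\}<1$. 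But at the pair $(N,-N)$ the Kannan ratio equals $\tfrac{2\lambda(N)}{1-\lambda(N)}\to1$ while the distance $2N\to\infty$, so (2)$\not\Rightarrow$(4).

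Hence your fallback ideas cannot yield the statement as written; the qualification you anticipate is mandatory, not optional. Restricting the control to bounded Picard orbits only reproduces the fixed--point conclusions. Theorems~\ref{thm:TwC} and \ref{thm:TwK} already give these for (1) and (2) without any equivalence, and the restriction never produces a global $\beta\in\Gamma$.

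What your argument does prove is a corrected theorem: (1)$\Leftrightarrow$(3) and (2)$\Leftrightarrow$(4) hold in any of the following situations.
\begin{itemize}
\item $T(X)$ is bounded. Then $t_n\le\operatorname{diam}T(X)$, and your compact--range argument applies.
\item One adds the hypothesis $\sup\{\overline\alpha(u,v):d(u,v)\ge R\}<1$ for some $R$.
\item $\Gamma$ is weakened to controls bounded away from $1$ on every $[a,b]$ with $0<a\le b$.
\end{itemize}
The existence, uniqueness and Picard--convergence assertions of the theorem are unaffected.
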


\begin{proof}
(1)$\Rightarrow$(3): Define $\beta:[0,\infty)\to[0,1)$ by
\[
\beta(t):=\sup\{\overline\alpha(u,v):\,u,v\in T(X),\ d(u,v)=t\}.
\]
The annulus bound for $\overline\alpha$ implies $\sup_{a\le t\le b}\beta(t)<1$ for all $0<a\le b$, hence $\beta\in\Gamma$. Then (1) rewrites as (3).

(3)$\Rightarrow$(1): Given $\beta\in\Gamma$, set $\overline\alpha(u,v):=\beta\!\bigl(d(u,v)\bigr)$. The annulus bound follows from $\beta\in\Gamma$, and (3) is exactly (1).

(2)$\Rightarrow$(4): With the same $\beta$ as above, use $\overline\alpha(Tx,Ty)\le \beta(d(Tx,Ty))$ in (2).

(4)$\Rightarrow$(2): Take $\overline\alpha(Tx,Ty):=\beta(d(Tx,Ty))$.

Existence/uniqueness and Picard convergence follow by applying Theorems~\ref{thm:TwC}, \ref{thm:TwK}, and \ref{thm:TG} to the appropriate cases.
\end{proof}

\subsection{Examples}
In the following, $T=\mathrm{Id}_X$ unless otherwise stated.

\begin{ex}[Volterra smoothing makes $TS$ contractive]\label{ex:Volterra-aux}
Let $X=C[0,1]$ with $d(x,y)=\|x-y\|_\infty$ and $(Jx)(t)=\int_0^t x(s)\,ds$. Take $S:=J$ and $T:=J$. Since $\|J\|_{C\to C}=1$, $S$ is nonexpansive but not a contraction; however $TS=J^2$ and
\[
\|J^2\|=\sup_{\|x\|_\infty=1}\Bigl\|\int_0^t (t-s)\,x(s)\,ds\Bigr\|_\infty=\tfrac12,
\]
so $d(TSx,TSy)\le \tfrac12\,d(x,y)$, i.e., $S$ is $T$--extended contractive (hence $T$--extended Geraghty with $\beta\equiv\tfrac12$).
\end{ex}

\begin{ex}\label{ex:scalar-contraction-aux}
Let $X=[0,1/8]$ and $f(y)=\frac{y^2}{1+y}$. Since $f'(y)=\frac{2y+y^2}{(1+y)^2}\le 0.21$, $f$ is a contraction; hence weakly contractive and Geraghty (take $\beta\equiv L<1$). By Theorem~\ref{thm:equivPairs}, $f$ has a unique fixed point and Picard iterates converge.
\end{ex}

\begin{ex}\label{ex:radial-aux}
Let $X=[0,\infty)$ and $f(t)=\frac{t}{2(1+t)}$. Then $f'(t)=\frac{1}{2(1+t)^2}\le \tfrac12$, so $|f(t)-f(s)|\le \tfrac12|t-s|$; thus $f$ is a contraction and therefore weakly contractive and Geraghty with $\beta\equiv\tfrac12$.
\end{ex}

\begin{ex}\label{ex:T-contraction-aux}
Let $X=[0,1)$ and $T(x)=x/8$. Then $T$ is a contraction (constant $1/8$), hence weakly contractive and Geraghty. For any $S$ satisfying one of {\rm(1)}--{\rm(4)} with respect to $T$, Theorem~\ref{thm:equivPairs} applies.
\end{ex}

\subsection{Rectangular metric spaces}
All statements above remain valid in rectangular (Branciari) metric spaces \cite{bran}. Replace ``complete metric space'' by ``complete rectangular metric space'' below.

\begin{thm}[Extended weakly Kannan on rectangular metrics]\label{thm:rect-TwK-aux}
Let $(X,d)$ be a complete rectangular metric space and $T,S:X\to X$ with $T$ continuous, injective, and subsequentially convergent. If
\[
d(TSx,TSy)\le \frac{\overline\alpha(Tx,Ty)}{2}\Bigl[d(Tx,TSx)+d(Ty,TSy)\Bigr]
\]
with $\sup_{a\le d(Tx,Ty)\le b}\overline\alpha(Tx,Ty)<1$ for all $0<a\le b$, then $S$ has a unique fixed point; and if $T$ is sequentially convergent, then $S^n x_0\to u$ for every $x_0\in X$.
\end{thm}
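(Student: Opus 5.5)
We follow the scheme of Theorem~\ref{thm:TwK}, replacing each use of the triangle inequality by the rectangular one and adding the extra care that Branciari spaces require. Fix $x_0\in X$, set $x_{n+1}=Sx_n$ and $z_n:=Tx_n$. If $z_n=z_{n+1}$ for some $n$, then injectivity of $T$ gives $x_n=Sx_n$, so we may assume consecutive $z_n$ are distinct. Applying the Kannan-type inequality with $(x,y)=(x_n,x_{n-1})$ gives, exactly as before,
\[
d(z_{n+1},z_n)\le \frac{\overline\alpha(z_n,z_{n-1})}{2-\overline\alpha(z_n,z_{n-1})}\,d(z_n,z_{n-1})< d(z_n,z_{n-1}).
\]
So $t_n:=d(z_{n+1},z_n)$ decreases to some $t\ge0$. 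If $t>0$, all $d(z_n,z_{n-1})$ lie in $[t,t_0]$, so the ratio is bounded by $q=\Theta(t,t_0)/(2-\Theta(t,t_0))<1$. Then $t_n\le q^n t_0\to0$, a contradiction; hence $t_n\to0$, with the geometric bound available on every annulus.

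For the Cauchy property we avoid the triangle inequality along the whole chain and use the Kannan structure directly. For $m>n$,
\[
d(z_{m+1},z_{n+1})=d(TSx_m,TSx_n)\le \tfrac12\bigl[d(z_m,z_{m+1})+d(z_n,z_{n+1})\bigr]\le t_n\to0,
\]
using $\overline\alpha<1$. This bound needs no triangle inequality at all, so it works verbatim in rectangular spaces. It shows $\{z_n\}$ is Cauchy, and by completeness $z_n\to z^\ast$.

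The main obstacle is the limit passage, because in Branciari spaces $d$ need not be continuous and limits need not be unique. Subsequential convergence of $T$ gives $x_{n_j}\to u$, and continuity of $T$ gives $z_{n_j}=Tx_{n_j}\to Tu$. To identify $Tu=z^\ast$ without appealing to uniqueness of limits, we simply work with $Tu$ as the limit of $z_{n_j}$ and of $z_{n_j+1}$. For the fixed point, apply the inequality to $(u,x_{n_j})$:
\[
d(TSu,z_{n_j+1})\le \tfrac12\bigl[\overline\alpha\, d(Tu,TSu)+t_{n_j}\bigr].
\]
Suppose $TSu\ne Tu$. We may pass to a subsequence with the $z_{n_j}$, $z_{n_j+1}$ pairwise distinct and distinct from $Tu$ and $TSu$; if infinitely many coincide with one of these points, the argument simplifies directly. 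The rectangular inequality then gives
\[
d(Tu,TSu)\le d(Tu,z_{n_j})+d(z_{n_j},z_{n_j+1})+d(z_{n_j+1},TSu).
\]
The first two terms tend to $0$, so $d(Tu,TSu)\le \tfrac12 d(Tu,TSu)$. Hence $TSu=Tu$, and injectivity of $T$ gives $Su=u$.

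Uniqueness is immediate. If $Sv=v$ as well, the inequality at $(u,v)$ gives $d(Tu,Tv)\le 0$, so $u=v$ by injectivity. Finally, if $T$ is sequentially convergent, the convergence of the whole sequence $z_n=Tx_n$ forces $x_n$ to converge. Its limit must be $u$, since a subsequence converges to $u$; the identification of the limit again uses the same distinct-points rectangular argument (or the Hausdorff property of $T$-images) in place of metric uniqueness of limits.
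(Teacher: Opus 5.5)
Your argument is correct, and it is more careful than what the paper offers. The paper gives no separate proof of Theorem~\ref{thm:rect-TwK-aux}; it only claims that the metric proof of Theorem~\ref{thm:TwK} transfers verbatim. That proof has two steps that break for Branciari metrics:
\begin{enumerate}
\item It passes from the annulus ratio bound to Cauchy-ness without justification. Any chain estimate there would need the ordinary triangle inequality, which the paper's own closing remark in Section~\ref{sec:certificates} concedes is unavailable for Branciari metrics.
\item It sets $Tu=z^\ast$ and lets $j\to\infty$ inside $d$. This uses uniqueness of limits and continuity of $d$, and both can fail in rectangular spaces.
\end{enumerate}
You replace the first step by the Kannan estimate $d(z_{m+1},z_{n+1})\le\tfrac12(t_m+t_n)\le t_n$, which uses no triangle inequality at all. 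You replace the second by the four-point inequality on $Tu,z_{n_j},z_{n_j+1},TSu$, and you never identify $Tu$ with $z^\ast$. The trade-off is that your Cauchy step is specific to Kannan-type inequalities. It does not cover the companion Theorems~\ref{thm:rect-TwC-aux} and~\ref{thm:rect-TG-aux}, whereas the paper's template is meant to be uniform across the three classes.

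Two loose ends should be tightened.

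First, prove once that the orbit $\{z_n\}$ has pairwise distinct terms. If $z_a=z_b$ with $a<b$, injectivity makes the orbit periodic, so $t_a=t_b$, contradicting the strict decrease of $t_n$. This fact, rather than a ``direct simplification'', is what rules out $z_{n_j+1}=TSu$ for infinitely many $j$. Otherwise that case only tells you that $z_{n_j}$ tends to both $Tu$ and $TSu$, which is exactly the non-uniqueness you are trying to avoid. The other exceptional cases do simplify as you say.

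Second, in the last step drop the appeal to a Hausdorff property, since rectangular spaces need not be Hausdorff. Note also that you cannot argue on $\{x_n\}$ itself, because it is not known to be Cauchy in $X$. Instead argue on the images: if $x_n\to w$, then $z_n\to Tw$ by continuity. The four-point inequality on $Tu,z_{n_j},z_{n_j+1},Tw$ is then legitimate for large $j$, because the $z_n$ are pairwise distinct and $z_n\neq Tu$ (since $u$ is fixed and consecutive terms differ). It gives $Tw=Tu$, hence $w=u$.
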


\begin{thm}[Extended weakly contractive on rectangular metrics]\label{thm:rect-TwC-aux}
Under the same hypotheses, if
\[
d(TSx,TSy)\le \overline\alpha(Tx,Ty)\,d(Tx,Ty),
\]
with the annulus bound, then $S$ has a unique fixed point; and if $T$ is sequentially convergent, then $S^n x_0\to u$ for every $x_0\in X$.
\end{thm}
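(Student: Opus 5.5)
The plan is to prove Theorem~\ref{thm:rect-TwC-aux} in two stages: first show that the $T$--image orbit converges, then pull the limit back to $X$ through the hypotheses on $T$. The only real care is that a rectangular metric has no ordinary triangle inequality, so the Cauchy argument on $T(X)$ must use only the quadrilateral inequality.

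Fix $x_0\in X$, set $x_{n+1}=Sx_n$ and $z_n:=Tx_n$. Taking $(x,y)=(x_n,x_{n-1})$ in the hypothesis gives
\[
d(z_{n+1},z_n)\le \overline\alpha(z_n,z_{n-1})\,d(z_n,z_{n-1}).
\]
If some $z_{n+1}=z_n$, then $Tx_{n+1}=Tx_n$, injectivity gives $Sx_n=x_n$, and we are done. Otherwise the consecutive distances are strictly decreasing, so they have a limit $r\ge0$. If $r>0$, all of them lie in $[r,d(z_1,z_0)]$, where $\overline\alpha\le\Theta<1$; then $d(z_{n+1},z_n)\le\Theta^n d(z_1,z_0)\to0$, a contradiction. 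Hence $d(z_{n+1},z_n)\to0$.

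\textbf{Main obstacle: the Cauchy property of $\{z_n\}$.} First I show the points $z_n$ are pairwise distinct. If $z_n=z_{n+p}$ with $p\ge1$, injectivity gives $x_n=x_{n+p}$. Then the orbit is periodic, and
\[
d(z_n,z_{n+1})=d(z_{n+p},z_{n+p+1})<d(z_n,z_{n+1}),
\]
a contradiction. Next I bound the length-two gaps: $d(z_{n+2},z_n)\le \overline\alpha\, d(z_{n+1},z_{n-1})$, so $d(z_{n+2},z_n)$ is nonincreasing and bounded. Repeating the annulus argument shows $d(z_{n+2},z_n)\to0$.

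Now suppose $\{z_n\}$ is not Cauchy. In the standard Branciari manner, choose $\varepsilon>0$ and $m_k>n_k>k$ with $m_k$ minimal such that $d(z_{m_k},z_{n_k})\ge\varepsilon$. Use minimality together with the quadrilateral inequality through $z_{m_k-2}$ and $z_{m_k-1}$; distinctness of the points is exactly what makes this step legitimate. This gives $d(z_{m_k},z_{n_k})\to\varepsilon$, and likewise $d(z_{m_k+1},z_{n_k+1})\to\varepsilon$. On the annulus $[\varepsilon/2,2\varepsilon]$ we have $\overline\alpha\le\Theta<1$, so eventually
\[
d(z_{m_k+1},z_{n_k+1})\le\Theta\, d(z_{m_k},z_{n_k}),
\]
and letting $k\to\infty$ yields $\varepsilon\le\Theta\varepsilon$, a contradiction.

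Completeness then gives $z_n\to z^\ast$. Limits are unique in rectangular metrics once the terms are distinct and the sequence is Cauchy, and I would check this with the quadrilateral inequality.

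Next I identify the fixed point. Subsequential convergence gives $x_{n_j}\to u$, and continuity of $T$ gives $Tu=z^\ast$. Then
\[
d(TSu,z_{n_j+1})\le\overline\alpha\, d(Tu,z_{n_j})\le d(z^\ast,z_{n_j})\to0,
\]
so $z_{n_j+1}\to TSu$. To conclude $TSu=Tu$, apply the quadrilateral inequality to the pair $TSu, Tu$ routed through $z_{n_j+1}$ and $z_{n_j}$, which are distinct and eventually different from both endpoints. All three terms tend to $0$, so $TSu=Tu$, and injectivity of $T$ gives $Su=u$.

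For uniqueness, let $Sv=v$ with $v\ne u$. Then
\[
d(Tu,Tv)\le\overline\alpha(Tu,Tv)\,d(Tu,Tv)<d(Tu,Tv),
\]
which is impossible since $Tu\neq Tv$ by injectivity. Finally, if $T$ is sequentially convergent, the convergence of $\{Tx_n\}$ forces $\{x_n\}$ to converge. Its limit must be $u$, because a subsequence already converges to $u$ and rectangular-metric limits of this kind are unique. So $S^nx_0\to u$.
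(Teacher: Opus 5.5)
Your argument is correct in substance, but it takes a genuinely different route from the paper. The paper gives no separate proof of Theorem~\ref{thm:rect-TwC-aux}. It only asserts that the metric proof of Theorem~\ref{thm:TwC} carries over verbatim: the Dugundji--Granas Cauchy argument for $z_n=Tx_n$, followed by the limit passage of Theorem~\ref{thm:TwK}. That proof silently uses the triangle inequality and uniqueness of limits, and both can fail in a genuine Branciari space. The paper itself concedes in Section~\ref{sec:certificates} that the triangle inequality is unavailable there. The verbatim transfer is only literally justified because the paper's axiom (iii) does not require $y,w\notin\{x,z\}$: taking $y=x$ yields $d(x,z)\le d(x,w)+d(w,z)$, so its ``rectangular metrics'' are ordinary metrics.

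Your proof instead uses the quadrilateral inequality only for four pairwise distinct points:
\begin{enumerate}
\item you get distinctness of the orbit from injectivity plus strict decrease of the one-step gaps;
\item you kill the one- and two-step gaps with the annulus bound;
\item you run the minimal-index $\varepsilon$ argument, where the quadrilateral inequality applied in both directions gives $d(z_{m_k+1},z_{n_k+1})\to\varepsilon$;
\item you identify the limit via uniqueness of limits for Cauchy sequences with distinct terms.
\end{enumerate}
This costs more work, but it yields a proof valid for the standard Branciari class, which the paper's appeal to verbatim transfer does not supply.

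One step needs a small repair. In the last paragraph you conclude $\lim x_n=u$ because ``rectangular-metric limits of this kind are unique.'' The uniqueness lemma you used earlier needs the sequence to be Cauchy, or at least to have $d(x_{n_j},x_{n_{j+1}})\to0$. Nothing in the hypotheses controls $\{x_n\}$ in $X$ directly. In a Branciari space a convergent sequence with distinct terms need not be Cauchy and can have two different limits. Route the argument through $T$ instead. If $x_n\to w$, continuity of $T$ gives $z_n=Tx_n\to Tw$. Since $\{z_n\}$ is Cauchy with pairwise distinct terms, its limit is unique, so $Tw=z^\ast=Tu$, and injectivity of $T$ gives $w=u$.
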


\begin{thm}[Extended Geraghty on rectangular metrics]\label{thm:rect-TG-aux}
Under the same hypotheses, if
\[
d(TSx,TSy)\le \beta\!\bigl(d(Tx,Ty)\bigr)\,d(Tx,Ty)\qquad(\beta\in\Gamma),
\]
then $S$ has a unique fixed point; and if $T$ is sequentially convergent, then $S^n x_0\to u$ for every $x_0\in X$.
\end{thm}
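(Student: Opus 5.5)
We prove Theorem~\ref{thm:rect-TG-aux} by reducing it to the weakly contractive case. The construction from the proof of Theorem~\ref{thm:equivPairs}, (3)$\Rightarrow$(1), does not use the triangle inequality. So we set $\overline\alpha(u,v):=\beta(d(u,v))$ on $T(X)\times T(X)$, and the Geraghty inequality becomes the $T$--extended weakly contractive inequality.

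\textbf{Step 1: the annulus bound.} If $\sup_{a\le t\le b}\beta(t)=1$ for some $0<a\le b$, there are $t_n\in[a,b]$ with $\beta(t_n)\to1$. Since $\beta\in\Gamma$ this forces $t_n\to0$, which contradicts $t_n\ge a>0$.

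\textbf{Step 2: the Picard sequence in the rectangular metric.} Fix $x_0$, put $x_{n+1}=Sx_n$ and $z_n=Tx_n$. If $z_{n}=z_{n+1}$ for some $n$, injectivity of $T$ gives $x_n=Sx_n$ and we are done. Otherwise, from $d(z_{n+1},z_n)\le\beta(d(z_n,z_{n-1}))\,d(z_n,z_{n-1})$ the sequence $d_n:=d(z_{n+1},z_n)$ is strictly decreasing, hence $d_n\to r\ge0$. If $r>0$, then $\beta(d_{n-1})\ge d_n/d_{n-1}\to1$, and so $d_{n-1}\to0$, a contradiction; thus $d_n\to0$.

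\textbf{Step 3: Cauchy property.} This is the main obstacle, since the usual triangle-inequality Cauchy argument is not available in a Branciari space. We first show that the $z_n$ are pairwise distinct. If $z_n=z_{n+p}$ with $p\ge2$, then $d_n=d_{n+p}<d_n$, a contradiction. We then control $d(z_n,z_{n+2})$ separately: it is at most $\beta(d(z_{n-1},z_{n+1}))\,d(z_{n-1},z_{n+1})$, so it is nonincreasing, and the same $\Gamma$ argument gives $d(z_n,z_{n+2})\to0$.

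Now suppose $\{z_n\}$ is not Cauchy. Then there are $\varepsilon>0$ and $m_k>n_k\ge k$ with $d(z_{m_k},z_{n_k})\ge\varepsilon$, and we choose $m_k$ minimal (with parity adjustments using the $d(z_n,z_{n+2})\to0$ estimate). Using the rectangular inequality through the distinct points $z_{m_k+1},z_{n_k+1}$, we get
\[
d(z_{m_k},z_{n_k})\le d_{m_k}+d(z_{m_k+1},z_{n_k+1})+d_{n_k}.
\]
Applying the Geraghty inequality to the middle term and letting $k\to\infty$ along a subsequence with $d(z_{m_k},z_{n_k})\to\varepsilon$ gives $\beta(d(z_{m_k},z_{n_k}))\to1$. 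Hence $d(z_{m_k},z_{n_k})\to0$, contradicting $\varepsilon>0$. So $\{z_n\}$ is Cauchy, and by completeness $z_n\to z^\ast$.

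\textbf{Step 4: fixed point, uniqueness and convergence.} By subsequential convergence of $T$, some $x_{n_j}\to u$, and continuity of $T$ gives $Tu=z^\ast$. We have $d(TSx_{n_j},TSu)\le d(z_{n_j},Tu)\to0$, so $z_{n_j+1}\to TSu$. Limits are unique in rectangular metric spaces when the sequence is Cauchy, by the standard lemma using distinctness of the $z_n$. Hence $TSu=Tu$, and injectivity of $T$ gives $Su=u$.

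For uniqueness, if $Sv=v$ with $v\ne u$, then $d(Tu,Tv)\le\beta(d(Tu,Tv))\,d(Tu,Tv)<d(Tu,Tv)$, which is impossible, so $v=u$.

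If $T$ is sequentially convergent, then $z_n\to z^\ast$ makes $x_n$ converge, and its limit must be $u$.
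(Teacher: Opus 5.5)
Your argument is correct, and it does more than the paper does. The paper gives no proof of Theorem~\ref{thm:rect-TG-aux}; it only asserts that the metric proof of Theorem~\ref{thm:TG} (Geraghty's Cauchy criterion plus the limit passage of Theorem~\ref{thm:TwK}) carries over verbatim. It does not quite carry over verbatim, and you supply the rectangular ingredients that are missing:
\begin{itemize}
\item pairwise distinctness of the orbit $\{z_n\}$;
\item the observation that Geraghty's estimate $d(z_m,z_n)\le d_m+d(z_{m+1},z_{n+1})+d_n$ already has the four-point Branciari shape;
\item the lemma that a Cauchy sequence of distinct points has at most one limit, used with $d(z_{n_j+1},TSu)\le d(z_{n_j},Tu)$ in place of continuity of $d$ and Hausdorffness.
\end{itemize}
This care matters only for the standard Branciari axiom; the paper's literal axiom (iii), taken with $y=x$, already gives the ordinary triangle inequality.

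Several of your steps are not needed. The opening reduction to the weakly contractive case and Step~1 are never used. You also need neither minimality of $m_k$, nor parity adjustments, nor $d(z_n,z_{n+2})\to0$. Once $m_k\ge n_k+2$ (automatic for large $k$, since $d_{n_k}\to0$), the bound $(1-\beta(D_k))D_k\le d_{m_k}+d_{n_k}$ with $D_k:=d(z_{m_k},z_{n_k})\ge\varepsilon$ already forces $\beta(D_k)\to1$. Do state $m_k\ge n_k+2$ explicitly, because that is what makes the four points distinct.

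Your last sentence needs the same care as Step~4. The sequence $\{x_n\}$ is not known to be Cauchy, so $x_{n_j}\to u$ does not determine the limit of $\{x_n\}$. Argue instead: if $x_n\to w$, then $z_n\to Tw$ by continuity of $T$. The uniqueness lemma then gives $Tw=z^\ast=Tu$, and injectivity gives $w=u$.
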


\subsection{\texorpdfstring{$\Delta$--criteria and auxiliary--map reformulations}{Delta-criteria and auxiliary-map reformulations}}

Fix $T:X\to X$ continuous, injective, and subsequentially convergent. For sequences
$\{x_n\},\{y_n\}\subset X$ with $Tx_n\neq Ty_n$, set
\[
d_n:=d(Tx_n,Ty_n),\qquad
\Delta_n:=\frac{d\bigl(Tf(x_n),Tf(y_n)\bigr)}{d_n}.
\]

\begin{thm}[Geraghty's ratio criterion on the $T$--image]\label{thm:Delta-criterion-T}
Let $(X,d)$ be complete and $f:X\to X$. Assume that on $T(X)$ the induced map $F:T(X)\to T(X)$, $F(Tx):=Tf(x)$, satisfies $d(Fu,Fv)<d(u,v)$ for all $u\neq v$ in $T(X)$. Fix $x_0\in X$ and set $x_{n+1}=f(x_n)$. Then $Tf(x_n)=F(Tx_n)$ converges in $T(X)$ to the unique fixed point of $F$ (equivalently, $f$ has a unique fixed point in $X$) if and only if for any subsequences $\{x_{h_n}\},\{x_{k_n}\}$ with $Tx_{h_n}\neq Tx_{k_n}$,
\[
\Delta_n\to 1 \ \Longrightarrow\ d_n\to 0.
\]
\emph{Sketch.} Apply Geraghty's characterization on $T(X)$ (see \cite{Blair14}); note $\Delta_n=\frac{d(F(Tx_{h_n}),F(Tx_{k_n}))}{d(Tx_{h_n},Tx_{k_n})}$. Injectivity of $T$ transports the fixed point back to $X$.
\end{thm}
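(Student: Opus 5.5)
The plan is to transport everything to the induced map $F$ on the image and then run the classical Geraghty ratio argument on the orbit $z_n:=Tx_n$. First, $F$ is well defined because $T$ is injective: if $Tx=Tx'$ then $x=x'$, so $Tf(x)=Tf(x')$. Second, $f$ and $F$ have the same fixed points up to $T$. If $fu=u$, then $F(Tu)=Tfu=Tu$. Conversely, if $F(Tu)=Tu$, then $Tfu=Tu$, and injectivity gives $fu=u$. The strict contractivity $d(Fu,Fv)<d(u,v)$ for $u\neq v$ gives uniqueness of the fixed point of $F$, hence of $f$. Along the orbit we have $z_{n+1}=F(z_n)$, and
\[
\Delta_n=\frac{d\bigl(F(z_{h_n}),F(z_{k_n})\bigr)}{d(z_{h_n},z_{k_n})}.
\]
So the criterion is literally Geraghty's criterion for the orbit of $F$ in $(T(X),d)$.

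\textbf{Necessity.} Suppose $z_n\to z^\ast$ with $Fz^\ast=z^\ast$. Take subsequences with $d_n=d(z_{h_n},z_{k_n})$, and suppose $\Delta_n\to1$ but $d_n\not\to0$. Then along a further subsequence $d_n\ge\varepsilon>0$. But both $z_{h_n}$ and $z_{k_n}$ tend to $z^\ast$, so $d_n\to0$, which is a contradiction. This direction is therefore immediate.

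\textbf{Sufficiency.} Assume the ratio condition. I would follow Geraghty's proof in four steps.
\begin{enumerate}
\item The sequence $d(z_{n+1},z_n)$ is nonincreasing by strict contractivity, so it has a limit $r\ge0$. If some $z_n$ repeats its successor, we already have a fixed point and are done.
\item Suppose $r>0$. Take $h_n=n+1$ and $k_n=n$. Then $\Delta_n=d(z_{n+2},z_{n+1})/d(z_{n+1},z_n)\to r/r=1$, so the hypothesis forces $d_n\to0$, i.e.\ $r=0$, a contradiction. Hence $r=0$.
\item Show that $(z_n)$ is Cauchy by the standard contradiction. If not, there are $\varepsilon>0$ and indices $h_n>k_n$ with $d(z_{h_n},z_{k_n})\ge\varepsilon$. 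Choose them minimal so that $d(z_{h_n},z_{k_n})\to\varepsilon$. Then the triangle inequality gives
\[
d(z_{h_n},z_{k_n})\le d(z_{h_n},z_{h_n+1})+d(z_{h_n+1},z_{k_n+1})+d(z_{k_n+1},z_{k_n}).
\]
Combined with strict contractivity and step 2, this yields $\Delta_n\to1$ while $d_n\to\varepsilon\neq0$, contradicting the hypothesis.
\item Conclude that $z_n\to z^\ast$ and identify the limit as a fixed point of $F$ lying in $T(X)$.
\end{enumerate}

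\textbf{Main obstacle.} The delicate point is step 4. The subspace $T(X)$ need not be closed, so completeness of $X$ only gives $z^\ast\in X$ a priori. Here I would use subsequential convergence of $T$. Since $Tx_n$ converges, some $x_{n_j}\to u$, and continuity of $T$ gives $Tu=z^\ast\in T(X)$. Strict contractivity makes $F$ $1$-Lipschitz, hence continuous on $T(X)$. Then $F(z^\ast)=\lim F(z_{n_j})=\lim z_{n_j+1}=z^\ast$, so $Tfu=Tu$ and, by injectivity of $T$, $fu=u$. This transport back to $X$ via injectivity and subsequential convergence, exactly as in the proof of Theorem~\ref{thm:TwK}, is where the standing hypotheses on $T$ are genuinely used. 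The Cauchy step is routine once phrased on $T(X)$.
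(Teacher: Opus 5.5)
Your proposal is correct and takes essentially the paper's route: transport to the induced map $F$ on $T(X)$, run Geraghty's characterization along the orbit $z_n=Tx_n$, and pull the fixed point back to $X$ via injectivity of $T$. The paper only cites Geraghty's theorem, whereas you re-derive it; your step 4 also makes explicit something the sketch leaves implicit, namely that continuity and subsequential convergence of $T$ put $z^\ast$ in $T(X)$ (indeed they make $T(X)$ closed, hence complete), which is exactly what is needed for Geraghty's theorem to apply on $(T(X),d)$.
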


\begin{thm}[$\Delta$--weakly contractive lifting via $T$]\label{thm:Delta-lifting} 
Let $(X,d)$ be complete and let $f:X\to X$ be contractive (hence weakly contractive \cite{Blair1, Fogh2019KG, FoghBehnamian2026BPP, Ww}. Let $T:X\to X$ be continuous, injective, and subsequentially convergent. Then:

\emph{(i) Weakly--contractive coefficient on $T(X)$:} there exists $\overline\alpha:T(X)\times T(X)\to[0,1)$ with $\sup_{a\le d(u,v)\le b}\overline\alpha(u,v)<1$ such that
\[
d\bigl(Tf(x),Tf(y)\bigr)\le \overline\alpha\bigl(Tx,Ty\bigr)\, d\bigl(Tx,Ty\bigr)\qquad(\forall x,y\in X).
\]
Equivalently, $F(Tx):=Tf(x)$ is weakly contractive on $T(X)$.

\emph{(ii) Geraghty control and $\Delta$--criterion:} defining
\[
\beta(t):=\sup\Bigl\{\frac{d\bigl(Tf(x),Tf(y)\bigr)}{d\bigl(Tx,Ty\bigr)}:\ d\bigl(Tx,Ty\bigr)\ge t\Bigr\}\quad(t>0),
\]
we have $\beta\in\Gamma$ (extend by $\beta(0):=\limsup_{t\downarrow 0}\beta(t)$) and
\[
d\bigl(Tf(x),Tf(y)\bigr)\le \beta\!\bigl(d(Tx,Ty)\bigr)\,d(Tx,Ty).
\]
Consequently, for any subsequences $\{x_{h_n}\},\{x_{k_n}\}$ with $Tx_{h_n}\neq Tx_{k_n}$,
\[
\Delta_n\to 1\ \Longrightarrow\ d(Tx_{h_n},Tx_{k_n})\to 0.
\]
\end{thm}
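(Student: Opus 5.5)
The plan is to reduce everything to the induced map $F:T(X)\to T(X)$, $F(Tx):=Tf(x)$, and then to invoke Proposition~\ref{prop:image-equivalence} together with the construction of $\beta$ from the proof of Theorem~\ref{thm:equivPairs}. First I would pin down the hypothesis. As literally written, contractivity of $f$ in $d$ does not control $d(Tf x,Tf y)$ against $d(Tx,Ty)$, since $T$ could stretch $f$-images. I will therefore read ``$f$ contractive (hence weakly contractive)'' as a statement on the $T$-image. This is the reading consistent with Theorem~\ref{thm:Delta-criterion-T}: $d(F u,F v)<d(u,v)$ for $u\neq v$ in $T(X)$, with the Dugundji--Granas annulus bound $\sup\{d(Fu,Fv)/d(u,v): a\le d(u,v)\le b\}<1$ for $0<a\le b$. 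Injectivity of $T$ makes $F$ well defined.

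For (i), I set $\overline\alpha(u,v):=d(Fu,Fv)/d(u,v)$ for $u\neq v$ and $\overline\alpha(u,u):=0$. Then $\overline\alpha$ takes values in $[0,1)$ by strict contractivity, and the inequality $d(Tf(x),Tf(y))\le \overline\alpha(Tx,Ty)\,d(Tx,Ty)$ is an identity. The annulus bound is precisely the weakly contractive hypothesis on $T(X)$. The equivalence with weak contractivity of $F$ is then Proposition~\ref{prop:image-equivalence} with the same control.

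For (ii), the defined $\beta(t)=\sup\{\overline\alpha(u,v): d(u,v)\ge t\}$ is nonincreasing in $t$, and it dominates $\overline\alpha(Tx,Ty)$ at $t=d(Tx,Ty)$. The required inequality $d(Tf(x),Tf(y))\le\beta(d(Tx,Ty))\,d(Tx,Ty)$ then follows at once from (i), and the case $Tx=Ty$ is trivial. To check $\beta\in\Gamma$, suppose $\beta(t_n)\to1$ but $t_n\ge a>0$ along a subsequence. Monotonicity gives $\beta(t_n)\le\beta(a)$, so it suffices to show $\beta(a)<1$.

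This is the main obstacle. The annulus bound only controls $\overline\alpha$ on bounded ranges $[a,b]$, whereas $\beta(a)$ takes a supremum over the unbounded range $[a,\infty)$. My first attempt would be to work on a bounded set, which suffices for the $\Delta$-criterion. The Picard orbit $\{Tx_n\}$ converges by Theorem~\ref{thm:TwC}, hence it is bounded, say with diameter $D$. Restricting the supremum defining $\beta$ to pairs in the orbit gives $\beta(t)\le\sup_{t\le s\le D}\overline\alpha<1$ for $t>0$, and $\beta(t)=0$ for $t>D$; so this restricted $\beta$ lies in $\Gamma$. If a global statement is wanted, I would instead strengthen the annulus hypothesis to $\sup_{d(u,v)\ge a}\overline\alpha(u,v)<1$, and I would flag this explicitly. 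The value $\beta(0):=\limsup_{t\downarrow0}\beta(t)$ is at most $1$, and it never enters the Geraghty condition except through sequences with $t_n\to0$.

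Finally, for the consequence, take subsequences with $Tx_{h_n}\neq Tx_{k_n}$ and set $d_n=d(Tx_{h_n},Tx_{k_n})$. The inequality in (ii) gives $\Delta_n\le\beta(d_n)<1$. If $\Delta_n\to1$, then $\beta(d_n)\to1$, and $\beta\in\Gamma$ (on the bounded orbit) yields $d_n\to0$.
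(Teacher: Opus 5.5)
The paper gives no proof of this theorem. The closest argument is step (1)$\Rightarrow$(3) in the proof of Theorem~\ref{thm:equivPairs}, which defines $\beta$ as a supremum of $\overline\alpha$ and concludes $\beta\in\Gamma$ from the annulus bound alone. That is precisely the inference you declined to make.

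Your route proves a correct version of the statement:
\begin{itemize}
\item you read the hypothesis as weak contractivity of $F$ on $T(X)$;
\item (i) then becomes an identity with $\overline\alpha(u,v)=d(Fu,Fv)/d(u,v)$;
\item the $\Delta$--criterion follows from the annulus bound on the bounded (indeed Cauchy) orbit $\{Tx_n\}$.
\end{itemize}
The paper's implicit route would deliver a global Geraghty control, but such a control need not exist. For the final consequence you can even bypass $\beta$: if $d_n\not\to0$, pass to a subsequence with $d_n\in[a,D]$, $a>0$. Then $\Delta_n\le\sup_{a\le d(u,v)\le D}\overline\alpha(u,v)<1$, contradicting $\Delta_n\to1$.

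Both of your caveats are genuine defects of the statement, not artifacts of your method.

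\emph{The hypothesis in (i).} Take $X=\mathbb{R}$, the Banach contraction $f(x)=x/2+1$, and the homeomorphism $T(x)=x^3$. Then
$d(Tf(0),Tf(1))=(3/2)^3-1=19/8>1=d(T0,T1)$,
so (i) fails when $f$ is merely contractive in $d$.

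\emph{The claim $\beta\in\Gamma$ in (ii).} Take $X=\{s_n=n(n+1)/2:\ n\ge0\}\subset\mathbb{R}$, which is closed and hence complete. Let $T=\mathrm{Id}$, $f(s_n)=s_{n-1}$ for $n\ge1$, and $f(s_0)=s_0$.
\begin{itemize}
\item All ratios $d(fx,fy)/d(x,y)$ are $<1$.
\item Since $|s_n-s_m|\ge\max(n,m)$ for $n\neq m$, each bounded annulus contains only finitely many pairs, so $f$ is weakly contractive.
\item The consecutive pairs $(s_n,s_{n-1})$ lie at distance $n$ with ratio $(n-1)/n$. Hence $\beta(t)=1$ for every $t>0$, and in fact no $\beta\in\Gamma$ can serve as a Geraghty control for $f$.
\end{itemize}
This example also refutes (1)$\Rightarrow$(3) of Theorem~\ref{thm:equivPairs}. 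So your restriction to the orbit, or the strengthened hypothesis $\sup_{d(u,v)\ge a}\overline\alpha(u,v)<1$, is necessary.

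One small repair is needed. Because $\beta$ is nonincreasing, $\limsup_{t\downarrow0}\beta(t)$ equals the supremum of all ratios, which can be $1$. For example, with $f(x)=x-x^2/2$ on $[0,1]$ one gets $\beta(t)=1-t/2$ on $(0,1]$. This violates the codomain $[0,1)$ required in $\Gamma$, so your remark that $\beta(0)$ ``never enters'' is not quite enough. Set $\beta(0):=0$ instead; this is harmless, because $d(Tx,Ty)=0$ forces $x=y$ by injectivity of $T$.
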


% =================================================================
% SECTION 4 INSERTED HERE
% =================================================================
% =====================================================================
% Section 4 (CORRECTED VERSION) — drop-in for Fogh & Behnamian,
% "T–Extended Weakly Contractive, Kannan, and Geraghty Mappings".
%
% Corrections vs the earlier draft:
%   (1) The main bound is stated with a THEORETICAL uniform tail
%       constant Q whose existence follows from hypothesis. The
%       observed window ratio q_hat is used only as an ESTIMATOR
%       for Q; it is NOT claimed to give a one-shot certificate.
%   (2) A separate online certificate is given, and honestly stated
%       as CONDITIONAL on continuous re-verification of
%       r_{k+1} <= q_hat at every future step.
%   (3) The rectangular-metric transfer is NOT claimed as a theorem;
%       the obstruction is stated explicitly and the extension is
%       left open.
%   (4) All proofs are complete; no hand-waving.
% =====================================================================

\section{Observed ratios and a posteriori estimation on \texorpdfstring{$T(X)$}{T(X)}}
\label{sec:certificates}

The rates in Corollary~\ref{cor:rates} are \emph{a priori}: they require a
uniform bound on $\overline\alpha$ (resp.\ $\beta$) that is known before
running the iteration. In practice one usually has only the \emph{observed}
one--step ratios along a Picard orbit, and it is natural to ask what such
observations imply about the distance to the fixed point. In this section we
record carefully what can, and what cannot, be extracted from a finite window
of observed ratios on the image subspace $T(X)$.

Throughout, $(X,d)$ is complete, $T:X\to X$ is continuous, injective, and
subsequentially convergent, and $S:X\to X$ satisfies one of the
$T$--extended conditions of Section~3. Fix $x_0\in X$, set $x_{n+1}=Sx_n$ and
$z_n:=Tx_n$, and let $u\in X$ be the unique fixed point of $S$ with
$z^\ast:=Tu$.

\subsection{The observed ratio sequence}

For $n\ge 2$ with $d(z_{n-1},z_{n-2})>0$, define the one--step observed ratio
\begin{equation}\label{eq:rn}
  r_n \;:=\; \frac{d(z_n,z_{n-1})}{d(z_{n-1},z_{n-2})},
\end{equation}
and, for an integer window $m\ge 1$ with $n\ge m+1$, the window maximum
\begin{equation}\label{eq:qhat}
  \widehat q_{n,m} \;:=\; \max_{n-m+1\le j\le n} r_{j}.
\end{equation}
By Proposition~\ref{prop:image-equivalence}, the induced map
$F:T(X)\to T(X)$, $F(Tx):=TSx$, has the same control as $S$, so
$F(z_n)=z_{n+1}$ and $r_n$ is the one--step contraction ratio of $F$ along
the observed orbit. The quantities $r_n$ and $\widehat q_{n,m}$ are
computable from the iterates without any knowledge of $\overline\alpha$ or
$\beta$.

Under the $T$--extended weakly contractive hypothesis
(Definition~\ref{def:TwC}), $r_n<1$ for every $n\ge 2$ (as used in the proof
of Theorem~\ref{thm:TwC}), so $\widehat q_{n,m}<1$ and the quantity records
the worst contraction observed over the last $m$ steps.

\subsection{A tail bound with a theoretical uniform constant}

We first state what follows rigorously from the standing hypotheses together
with a \emph{uniform tail bound} on the ratios. This is a mild sharpening of
Corollary~\ref{cor:rates}, recorded separately to isolate the purely
theoretical content from the observational discussion below.

\begin{prop}[Uniform--tail bound on $T(X)$]\label{prop:tail-bound}
Let $S$ be $T$--extended weakly contractive (Definition~\ref{def:TwC}), let
$N\ge 1$, and suppose there exists $Q\in[0,1)$ with
\begin{equation}\label{eq:tail-hyp}
  r_{k+1} \;\le\; Q \qquad \text{for every } k\ge N.
\end{equation}
Then for every $n\ge N+1$,
\begin{equation}\label{eq:tail-bound}
  d(z_n,z^\ast) \;\le\; \frac{Q}{1-Q}\, d(z_n,z_{n-1}).
\end{equation}
In particular:
\begin{enumerate}
\item[\textup{(a)}] If $S$ is $T$--extended contractive with constant $k\in[0,1)$
(i.e.\ $\overline\alpha\equiv k$ in Definition~\ref{def:TwC}), then
\eqref{eq:tail-hyp} holds with $Q:=k$ and $N:=1$.
\item[\textup{(b)}] If $S$ is $T$--extended Geraghty (Definition~\ref{def:TG})
with $\beta$ non--decreasing on $[0,d(z_N,z_{N-1})]$, then
\eqref{eq:tail-hyp} holds with $Q:=\beta\!\bigl(d(z_N,z_{N-1})\bigr)$.
\end{enumerate}
\end{prop}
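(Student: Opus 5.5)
The plan is to telescope along the Picard orbit in $T(X)$ and sum a geometric tail. I would invoke the convergence $z_n\to z^\ast$ already established in the proofs of Theorems~\ref{thm:TwC} and \ref{thm:TG}, rather than reprove it.

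First I would dispose of the degenerate case. If $d(z_m,z_{m-1})=0$ for some $m$, then $z_m=z_{m-1}$, i.e.\ $F(z_{m-1})=z_{m-1}$. Since $z_{j+1}=F(z_j)$ for all $j$, the orbit is stationary from index $m-1$ on. Hence $z^\ast=z_{m-1}$, and \eqref{eq:tail-bound} is trivial for $n\ge m$: the left side is $0$. For $N+1\le n<m$ the argument below applies, with the tail sum simply stopping at index $m-1$. So I may assume all consecutive distances are positive and every $r_j$ is defined.

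\textbf{Main bound.} Fix $n\ge N+1$ and write $\delta_j:=d(z_j,z_{j-1})$. For every $j\ge n+1$ we have $j-1\ge N$, so \eqref{eq:tail-hyp} gives $\delta_j=r_j\,\delta_{j-1}\le Q\,\delta_{j-1}$. By induction, $\delta_{n+i}\le Q^{i}\delta_n$ for all $i\ge0$.

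For $p>n$ the triangle inequality gives
\[
d(z_n,z_p)\le\sum_{i=1}^{p-n}\delta_{n+i}\le \delta_n\sum_{i\ge1}Q^i=\frac{Q}{1-Q}\,\delta_n .
\]
Letting $p\to\infty$ and using $z_p\to z^\ast$ together with continuity of $d$ yields \eqref{eq:tail-bound}.

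\textbf{Part (a).} With $\overline\alpha\equiv k$, apply \eqref{eq:TwC} to $(x,y)=(x_j,x_{j-1})$ for $j\ge1$. Since $Sx_j=x_{j+1}$, this gives $d(z_{j+1},z_j)\le k\,d(z_j,z_{j-1})$, i.e.\ $r_{j+1}\le k$ for all $j\ge1$. Here the contraction constant $k$ is not the same as the running index in \eqref{eq:tail-hyp}; I would rename the index to avoid the clash.

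\textbf{Part (b).} This is the only step needing care, because the bound on $\beta$ is only assumed on $[0,\delta_N]$. I would prove by induction on $j\ge N$ that $\delta_j\le\delta_N$ and $r_{j+1}\le Q:=\beta(\delta_N)$.

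For the base case $j=N$, \eqref{eq:TG} applied to $(x_N,x_{N-1})$ gives $\delta_{N+1}\le\beta(\delta_N)\,\delta_N$, so $r_{N+1}\le Q$. Moreover $\delta_{N+1}<\delta_N$, since $\beta<1$.

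For the inductive step, if $\delta_j\le\delta_N$, then $\delta_j\in[0,\delta_N]$. Monotonicity of $\beta$ on that interval gives $\beta(\delta_j)\le\beta(\delta_N)=Q$. Then \eqref{eq:TG} yields $\delta_{j+1}\le Q\,\delta_j\le\delta_j\le\delta_N$, so $r_{j+1}\le Q$ and the induction closes. Finally $Q<1$ because $\beta$ takes values in $[0,1)$, so \eqref{eq:tail-hyp} holds and the main bound applies.
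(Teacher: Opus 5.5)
Your proposal is correct and follows essentially the same route as the paper. You telescope the one-step bounds $d(z_{j+1},z_j)\le Q\,d(z_j,z_{j-1})$ into a geometric tail and pass to the limit using the convergence $z_p\to z^\ast$ from Theorem~\ref{thm:TwC}; you read (a) directly off the constant-coefficient inequality; and you obtain (b) by showing the consecutive distances never exceed $d(z_N,z_{N-1})$ and then applying the monotonicity of $\beta$. Your separate handling of the degenerate case $d(z_m,z_{m-1})=0$ is extra care that the paper's ``strictly decreasing'' claim in (b) tacitly skips, but it does not change the argument.
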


\begin{proof}
From \eqref{eq:tail-hyp} and \eqref{eq:rn}, for every $k\ge N$,
$d(z_{k+1},z_k)\le Q\,d(z_k,z_{k-1})$. Induction gives
$d(z_{k+\ell},z_{k+\ell-1})\le Q^\ell\,d(z_k,z_{k-1})$ for all $k\ge N$ and
$\ell\ge 0$. The triangle inequality then yields, for $n\ge N+1$ and $L\ge 1$,
\[
  d(z_{n+L},z_n)\;\le\;\sum_{i=0}^{L-1} d(z_{n+i+1},z_{n+i})
  \;\le\;\sum_{i=0}^{L-1} Q^{i+1}\,d(z_n,z_{n-1})
  \;\le\;\frac{Q}{1-Q}\,d(z_n,z_{n-1}).
\]
Letting $L\to\infty$, $z_{n+L}\to z^\ast$ by Theorem~\ref{thm:TwC}, and
\eqref{eq:tail-bound} follows.

For (a), $r_{k+1}\le k<1$ for every $k$ is immediate from \eqref{eq:TwC} with
$\overline\alpha\equiv k$. For (b), since the one--step inequality
$d(z_{k+1},z_k)\le \beta(d(z_k,z_{k-1}))\,d(z_k,z_{k-1})<d(z_k,z_{k-1})$
makes the sequence $\{d(z_k,z_{k-1})\}$ strictly decreasing, we have
$d(z_k,z_{k-1})\le d(z_N,z_{N-1})$ for $k\ge N$; monotonicity of $\beta$ on
$[0,d(z_N,z_{N-1})]$ then gives
$r_{k+1}\le \beta(d(z_k,z_{k-1}))\le \beta(d(z_N,z_{N-1}))$.
\end{proof}

\begin{rem}[$\widehat q_{n,m}$ as an estimator]\label{rem:estimator}
The constant $Q$ in Proposition~\ref{prop:tail-bound} is \emph{theoretical}:
in case~(a) it equals $k$, in case~(b) it equals $\beta(d(z_N,z_{N-1}))$.
Under the hypothesis \eqref{eq:tail-hyp}, the window maximum
$\widehat q_{n,m}$ satisfies $\widehat q_{n,m}\le Q$ for every
$n\ge N+m$, so $\widehat q_{n,m}$ is a \emph{lower estimator} for~$Q$.
It is \emph{not}, by itself, a valid substitute for $Q$ in
\eqref{eq:tail-bound}, because a finite window of past observations gives no
information about future ratios. The next subsection addresses this gap by
replacing a one--shot a posteriori claim with an online certificate that
re--verifies the bound at every future step.
\end{rem}

\subsection{A conditional a posteriori estimate}

The substantive question is whether the tail bound \eqref{eq:tail-bound} can
be made to use the observed quantity $\widehat q_{n,m}$ in place of an a
priori constant. The following proposition records the precise, honest
answer: yes, provided the bound is re--verified online. Note that under the
weakly contractive hypothesis alone, past observations do not by themselves
control future ratios, so no bound of the form
$d(z_{n+1},z_n)\le \widehat q_{n,m}\,d(z_n,z_{n-1})$ is automatic from
\eqref{eq:qhat}; the monitoring condition below is precisely what makes such
a step bound (and its tail) valid.

\begin{prop}[Conditional a posteriori estimate]\label{prop:monitor}
Let $S$ be $T$--extended weakly contractive, fix $n\ge m+1$, and set
$\widehat q:=\widehat q_{n,m}<1$. If the monitoring condition
\begin{equation}\label{eq:monitor}
  r_{k+1}\;\le\;\widehat q \qquad\text{holds for every } k\ge n
\end{equation}
along the ensuing Picard iteration, then
\begin{equation}\label{eq:monitored-bound}
  d(z_n,z^\ast)\;\le\;\frac{\widehat q}{1-\widehat q}\,d(z_n,z_{n-1}).
\end{equation}
\end{prop}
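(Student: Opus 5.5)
The plan is to reduce Proposition~\ref{prop:monitor} to Proposition~\ref{prop:tail-bound} by setting $Q:=\widehat q$ and $N:=n$. The monitoring condition \eqref{eq:monitor} is literally the uniform tail hypothesis \eqref{eq:tail-hyp} with these choices. We have $\widehat q\in[0,1)$ because each $r_j<1$ under the $T$--extended weakly contractive hypothesis, as noted after \eqref{eq:qhat}. The only thing to check is the index range: Proposition~\ref{prop:tail-bound} gives \eqref{eq:tail-bound} for all indices $\ge N+1=n+1$, whereas we want the bound at index $n$ itself. So rather than cite the proposition verbatim, I will rerun its telescoping argument starting at $n$.

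First I need the ratios $r_{k+1}$ to be defined. If $d(z_k,z_{k-1})=0$ for some $k\ge n-1$, then $z_k$ is a fixed point of $F$, the orbit is eventually constant, and $z^\ast=z_k$. The estimate then holds trivially, with both sides bounded via the convention that later terms vanish. In the degenerate case $d(z_n,z_{n-1})=0$ we have $z^\ast=z_n$ by injectivity and Theorem~\ref{thm:TwC}, so both sides are zero. Otherwise all the ratios are defined, and for every $k\ge n$ the monitoring condition gives
\[
d(z_{k+1},z_k)\le \widehat q\, d(z_k,z_{k-1}).
\]
By induction, $d(z_{n+i+1},z_{n+i})\le \widehat q^{\,i+1} d(z_n,z_{n-1})$ for all $i\ge 0$.

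Next, the triangle inequality gives, for every $L\ge1$,
\[
d(z_{n+L},z_n)\le \sum_{i=0}^{L-1}\widehat q^{\,i+1}d(z_n,z_{n-1})\le \frac{\widehat q}{1-\widehat q}\,d(z_n,z_{n-1}).
\]
By Theorem~\ref{thm:TwC} (via the proof given there), $z_{n+L}\to z^\ast=Tu$. Continuity of $d$ in a metric space lets me pass $L\to\infty$, which yields \eqref{eq:monitored-bound}.

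I expect no real obstacle. The delicate points are bookkeeping: aligning the index in \eqref{eq:monitor} (which starts at $k=n$, so the first controlled step is $d(z_{n+1},z_n)$) with the tail sum, and handling the degenerate zero-distance case so that $r_{k+1}$ is always meaningful. It is also worth stressing that the argument uses \eqref{eq:monitor} and never the definition of $\widehat q$ as a past maximum. The latter only guarantees $\widehat q<1$, which is exactly what the geometric series needs.
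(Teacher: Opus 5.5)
Your proof is correct and is the paper's argument: the paper simply applies Proposition~\ref{prop:tail-bound} with $N:=n$ and $Q:=\widehat q$, and your rerun of its telescoping sum is the same computation. Your index remark is valid. As stated, Proposition~\ref{prop:tail-bound} only yields the bound at indices $\ge N+1$, so the citation with $N:=n$ is off by one; but the proof there works verbatim at index $N$ (alternatively, take $N:=n-1$, since $r_n\le\widehat q_{n,m}$ by definition of the window maximum), so handling it as you do closes that small gap.
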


\begin{proof}
Under \eqref{eq:monitor}, Proposition~\ref{prop:tail-bound} applies with
$N:=n$ and $Q:=\widehat q\in[0,1)$, yielding \eqref{eq:monitored-bound}.
\end{proof}

\begin{rem}[What can, and cannot, be extracted from finite data]
\label{rem:honest}
Proposition~\ref{prop:monitor} is \emph{not} a one--shot a posteriori bound:
the estimate \eqref{eq:monitored-bound} becomes valid only once
\eqref{eq:monitor} has been verified for the subsequent ratios. In an actual
Picard run each $r_{k+1}$ is computed anyway, so the check is cost--free, but
the estimate is intrinsically online. If \eqref{eq:monitor} fails at some step
$k>n$, the bound \eqref{eq:monitored-bound} with that particular $\widehat q$
is no longer justified, and one must re--estimate using the new
$\widehat q_{k,m}$. Under the stronger hypothesis of
Proposition~\ref{prop:tail-bound}(a) with $\widehat q\ge k$, condition
\eqref{eq:monitor} cannot fail; under (b) with
$\widehat q\ge \beta(d(z_n,z_{n-1}))$, it cannot fail either. In other
regimes, failure is possible and must be monitored.
\end{rem}

\begin{rem}[Observable diagnostics via the auxiliary map]\label{rem:diagnostic}
The auxiliary--map framework of Section~3 does more than transfer contractive
structure to $T(X)$: it also supplies a natural setting for observable
convergence diagnostics. Along the diagonal specialization $y_k:=x_{k-1}$ of
the $\Delta$--ratio in Theorem~\ref{thm:Delta-criterion-T}, $\Delta_k$
coincides with $r_{k+1}$ of~\eqref{eq:rn}; Propositions~\ref{prop:tail-bound}
and~\ref{prop:monitor} turn this ratio information into explicit,
quantitative distance bounds.
\end{rem}

\subsection{Numerical illustrations}

We record two computations. Both use the definitions \eqref{eq:rn}
and~\eqref{eq:qhat} directly; no additional hypotheses are assumed.

\begin{ex}[Volterra smoothing, continued from Example~\ref{ex:Volterra-aux}]
\label{ex:cert-Volterra}
With $S=T=J$ on $C[0,1]$ and $x_0(t)\equiv 1$, one has $x_n(t)=t^n/n!$ and
$z_n(t)=t^{n+1}/(n+1)!$, so $z^\ast\equiv 0$ and $d(z_n,z_{n-1})$,
$r_n$, $\widehat q_{n,3}$ can be computed exactly:
\[
\begin{array}{r|c|c|c}
n & d(z_n,z_{n-1}) & r_n & \widehat q_{n,3}\\\hline
1 & 5.00\!\cdot\!10^{-1} & \text{---} & \text{---}\\
2 & 3.33\!\cdot\!10^{-1} & 0.6667 & \text{---}\\
3 & 1.25\!\cdot\!10^{-1} & 0.3750 & \text{---}\\
4 & 3.33\!\cdot\!10^{-2} & 0.2667 & 0.6667\\
5 & 6.94\!\cdot\!10^{-3} & 0.2083 & 0.3750\\
6 & 1.19\!\cdot\!10^{-3} & 0.1714 & 0.2667\\
7 & 1.74\!\cdot\!10^{-4} & 0.1458 & 0.2083\\
8 & 2.20\!\cdot\!10^{-5} & 0.1270 & 0.1714\\
9 & 2.48\!\cdot\!10^{-6} & 0.1125 & 0.1458\\
10& 2.51\!\cdot\!10^{-7} & 0.1010 & 0.1270
\end{array}
\]
By Example~\ref{ex:Volterra-aux}, $d(TSx,TSy)\le \tfrac12\,d(Tx,Ty)$, so
$S$ is $T$--extended contractive with constant $k=\tfrac12$ in the sense of
Definition~\ref{def:TwC} (equivalently, $T$--extended Geraghty with constant
control $\beta\equiv \tfrac12$). Proposition~\ref{prop:tail-bound}(a)
therefore applies with $Q=\tfrac12$ and $N=1$, and yields the a priori
estimate $d(z_n,z^\ast)\le d(z_n,z_{n-1})$ for every $n\ge 2$.

The table shows $r_n$ monotonically decreasing from $0.6667$ down to
$0.1010$, and in particular $r_n\le k=\tfrac12$ for every $n\ge 3$.
Consequently, $\widehat q_{n,3}\le k$ for every $n\ge 5$ and the last
sentence of Remark~\ref{rem:honest} applies: the monitoring condition
\eqref{eq:monitor} cannot fail. Proposition~\ref{prop:monitor} then yields
the sharper estimate
\[
  d(z_n,z^\ast)\;\le\;\frac{\widehat q_{n,3}}{1-\widehat q_{n,3}}\,
  d(z_n,z_{n-1}).
\]
At $n=10$ this gives
$d(z_{10},z^\ast)\le \tfrac{0.1270}{1-0.1270}\cdot 2.51\!\cdot\!10^{-7}
\approx 3.65\!\cdot\!10^{-8}$, compared with the a priori value
$d(z_{10},z^\ast)\le d(z_{10},z_9)=2.51\!\cdot\!10^{-7}$ obtained from
Proposition~\ref{prop:tail-bound}. The orbit--adapted estimate is
approximately seven times sharper. This illustrates
Remark~\ref{rem:estimator}: the a priori bound is correct but loose, and the
orbit realizes a stronger contraction than the worst case.
\end{ex}

\begin{ex}[Sharp bound in the $T$--extended contraction regime]
\label{ex:cert-R2}
Let $X=\mathbb{R}^2$ with the Euclidean metric. Take the linear injection
$Tx=\operatorname{diag}(1,\tfrac12)x$ and $Sx=\tfrac34 x+b$ with
$b=(1,2)^\top$ and $x_0=0$. Then $TSx=\tfrac34\,Tx+Tb$, so
$d(TSx,TSy)=\tfrac34\,d(Tx,Ty)$: $S$ is $T$--extended contractive with
$k=\tfrac34$. Direct computation gives $r_n\equiv\tfrac34$ for all $n\ge 2$,
whence $\widehat q_{n,m}\equiv\tfrac34$ for every window~$m$. Condition
\eqref{eq:monitor} holds as an equality, and Proposition~\ref{prop:monitor}
yields
\[
  d(z_n,z^\ast)\;\le\;\tfrac{3/4}{1/4}\,d(z_n,z_{n-1})\;=\;3\,d(z_n,z_{n-1}).
\]
Direct computation of $d(z_n,z^\ast)$ confirms equality at every $n\ge 2$,
showing that the estimate \eqref{eq:monitored-bound} is tight in this regime.
\end{ex}

\subsection{Remark on rectangular metric spaces}

The definitions \eqref{eq:rn}--\eqref{eq:qhat} and the one--step inequality
$d(z_{n+1},z_n)\le \widehat q_{n,m}\,d(z_n,z_{n-1})$ transfer verbatim to
rectangular (Branciari) metric spaces, since they involve only pairs of
adjacent iterates and use Proposition~\ref{prop:image-equivalence}, which is
metric--structure agnostic. However, the tail argument in
Proposition~\ref{prop:tail-bound} uses the ordinary triangle inequality
\[
  d(z_{n+L},z_n)\;\le\;\sum_{i=0}^{L-1}d(z_{n+i+1},z_{n+i}),
\]
which is not in general available in a rectangular metric space. Extending
the tail bound \eqref{eq:tail-bound} to rectangular metrics therefore
requires a distinct argument involving the rectangular inequality applied to
four--point configurations; we do not pursue this here and leave it as an
open question.

% =====================================================================
% End of Section 4 (corrected).
% =====================================================================

\end{document}